\documentclass[a4paper,10pt]{amsart}

\usepackage[all]{xy}
\usepackage[leqno]{amsmath}
\usepackage{amssymb}
\usepackage{amsthm}
\usepackage[french]{babel}
\usepackage[T1]{fontenc}
\usepackage[colorlinks=true]{hyperref}
\usepackage{amsxtra}
\usepackage{mathtools}
\usepackage{stmaryrd}

\frenchspacing

\newcounter{eq}[subsection]
\newcounter{seq}[subsubsection]

%nouvelles commandes pour les compteurs d'equations

%environnements mathematiques

\theoremstyle{plain}
\newtheorem*{theo}{Th\'eor\`eme}
\newtheorem{theorem}[subsection]{Th\'eor\`eme}
\newtheorem{lemma}[subsection]{Lemme}
\newtheorem{prop}[subsection]{Proposition}
\newtheorem{cor}[subsection]{Corollaire}

\theoremstyle{definition}
\newtheorem{rem}[subsection]{Remarque}

%raccourcis

\newcommand{\lra}{\longrightarrow}
\newcommand{\fact}{\mathrm{fact}}

%calligraphique

\newcommand{\cE}{\mathcal{E}}
\newcommand{\cF}{\mathcal{F}}
\newcommand{\cG}{\mathcal{G}}
\newcommand{\cH}{\mathcal{H}}

\newcommand{\cL}{\mathcal{L}}
\newcommand{\cM}{\mathcal{M}}
\newcommand{\cN}{\mathcal{N}}
\newcommand{\cO}{\mathcal{O}}
\newcommand{\cP}{\mathcal{P}}

%corps

\newcommand{\ik}{\mathbf{k}}

\newcommand{\IQ}{\mathbf Q}
\newcommand{\IZ}{\mathbf Z}

\newcommand{\IA}{\mathbf A}
\newcommand{\IP}{\mathbf P}

%fraktur

\newcommand{\km}{\mathfrak{m}}

%espaces projectif

\newcommand{\PP}{\mathbf{P}}

%operateurs

\DeclareMathOperator{\Alb}{Alb}
\DeclareMathOperator{\Picard}{P}

\DeclareMathOperator{\Pic}{Pic}
\DeclareMathOperator{\NS}{NS}
\DeclareMathOperator{\Cl}{Cl}
\DeclareMathOperator{\Ca}{Ca}
\DeclareMathOperator{\codim}{codim}
\DeclareMathOperator{\Spec}{Spec}

\DeclareMathOperator{\Hom}{Hom}
\DeclareMathOperator{\id}{id}

%latin

\newcommand{\ie}{{i.e. }}

%choisit le tiret pour l'environnement itemize

\title{Sur le produit de vari\'et\'es localement factorielles ou $\IQ$-factorielles}

\author{Samuel Boissi\`ere}

\address{Samuel Boissi\`ere,
Laboratoire de Math\'ematiques et Applications,
UMR 7348 du CNRS,
B\^atiment H3,
Boulevard Marie et Pierre Curie,
Site du Futuroscope,
TSA 61125,
86073 Poitiers Cedex 9,
France}
			
\email{samuel.boissiere@math.univ-poitiers.fr}

\urladdr{http://www-math.sp2mi.univ-poitiers.fr/$\sim$sboissie/}

\author{Ofer Gabber}

\address{Institut des Hautes \'Etudes Scientifiques, Le Bois-Marie,
35, route de Chartres, F-91440 Bures-sur-Yvette}

\email{gabber@ihes.fr}

\author{Olivier Serman}

\address{Laboratoire Paul Painlev\'e UMR CNRS 8524,
Universit\'e Lille 1, Cit\'e Scientifique, F-59655 Villeneuve-d'Ascq cedex}

\email{olivier.serman@math.univ-lille1.fr}

\urladdr{http://math.univ-lille1.fr/$\sim$serman/}

\date{\today}

\pagestyle{plain}

\begin{document}

\begin{abstract}
Nous d\'emontrons que, sur un corps alg\'ebriquement clos, les lieux de factorialit\'e locale et de $\IQ$-factorialit\'e d'une vari\'et\'e sont ouverts, que le produit de vari\'et\'es 
localement factorielles est une vari\'et\'e localement factorielle, et que cette propri\'et\'e reste vraie pour les vari\'et\'es $\IQ$-factorielles 
lorsque le corps de base n'est pas la cl\^oture alg\'ebrique d'un corps fini.
\end{abstract}

\maketitle

\section*{Introduction}

Les propri\'et\'es de factorialit\'e et de $\IQ$-factorialit\'e locales d\'efinissent des vari\'et\'es alg\'ebriques poss\'edant des singularit\'es raisonnables: on 
sait par exemple que le lieu exceptionnel de toute r\'esolution d'une telle singularit\'e est de codimension pure \'egale \`a $1$. N\'eanmoins 
ces notions ne se comportent pas bien 
vis-\`a-vis de la topologie \'etale: il existe en effet des anneaux locaux factoriels dont le compl\'et\'e ne l'est plus. C'est ainsi le cas de la singularit\'e complexe 
d\'efinie par l'\'equation $xy+zw+z^3+w^3=0$ (voir~\cite[p. 104]{kawamata88}).
D'autres exemples de tels anneaux apparaissent lors de l'\'etude de certains espaces de modules de faisceaux (par exemple sur $\PP^2$, voir~\cite{drezet}).

Dans toute la suite nous appelons \og vari\'et\'e\fg~tout sch\'ema noeth\'erien int\`egre, s\'epar\'e, de type fini sur un corps alg\'ebriquement clos $\ik$.

Nous donnons une d\'emonstration des r\'esultats suivants:

\begin{theo}
Le lieu de factorialit\'e locale d'une vari\'et\'e est ouvert.
\end{theo}

\begin{theo}
Soient $X$ et $Y$ deux vari\'et\'es localement factorielles. Le produit $X \times_\ik Y$ est une vari\'et\'e localement factorielle.
\end{theo}

Ce bon comportement de la factorialit\'e locale est assez inattendu au vu des probl\`emes rappel\'es plus haut. Il semblait m\^eme n\'ecessaire de recourir \`a la notion d'anneau local 
\textit{g\'eom\'etriquement factoriel}
pour esp\'erer obtenir de tels \'enonc\'es (cf~\cite[Expos\'e XIII]{sga2}).

Ces r\'esultats s'\'etendent aux vari\'et\'es localement $\IQ$-factorielles, moyennant pour le second \'enonc\'e une hypoth\`ese 
suppl\'ementaire sur le corps de base $\ik$.

\begin{theo}
 Le lieu de $\IQ$-factorialit\'e d'une vari\'et\'e est ouvert.
\end{theo}

\begin{theo}
Soient $X$ et $Y$ deux vari\'et\'es localement $\IQ$-factorielles. Si $\ik$ n'est pas la cl\^oture alg\'ebrique d'un corps fini, alors le produit $X \times_\ik Y$ est une 
vari\'et\'e localement $\IQ$-factorielle.
\end{theo}

Ce dernier th\'eor\`eme permet en particulier de lever une hypoth\`ese superflue dans le r\'esultat de Fu et Namikawa~\cite[Theorem 2.2]{funamikawa} portant sur l'unicit\'e de
 la r\'esolution cr\'epante d'un 
produit de vari\'et\'es localement $\IQ$-factorielles singuli\`eres admettant une r\'esolution cr\'epante dont le diviseur exceptionnel est irr\'eductible.

Bingener et Flenner ont donn\'e dans~\cite{bingenerflenner} une d\'emonstration de l'ouverture des lieux factoriel et $\IQ$-factoriel d'un sch\'ema normal excellent admettant une 
r\'esolution des singularit\'es. Leur d\'emonstration est de nature locale. Nous donnons ici une d\'emonstration valable pour toute vari\'et\'e d\'efinie sur un corps 
alg\'ebriquement clos. En ce qui concerne 
la question de la stabilit\'e par produit direct, il r\'esulte par exemple de Boutot~\cite[III.2.14]{boutot} que le produit d'une vari\'et\'e localement g\'eom\'etriquement factorielle et d'une vari\'et\'e lisse est localement g\'eom\'etriquement 
factorielle. 

Une vari\'et\'e normale est localement factorielle si et seulement si les notions de diviseurs de Weil et de Cartier co\"incident, ce qui revient 
\`a dire que tout faisceau
divisoriel (voir~\S\ref{faisceaudivisoriel}) est inversible. On est ainsi amen\'e pour montrer l'ouverture des lieux factoriel ou $\IQ$-factoriel d'une vari\'et\'e 
normale \`a 
\'etudier le lieu 
d'inversibilit\'e des faisceaux divisoriels d\'efinis sur cette vari\'et\'e. Nous \'etablissons ces r\'esultats en nous appuyant sur diverses propri\'et\'es des vari\'et\'es 
d'Albanese, qui sont rappel\'ees dans la section~\ref{albanese}. Les
 deux th\'eor\`emes concernant la factorialit\'e locale sont d\'emontr\'es dans les sections~\ref{factouvert} et~\ref{factprod}, et leurs extensions \`a la 
$\IQ$-factorialit\'e font l'objet de la section~\ref{Qfact}.

\section{Factorialit\'e et  $\IQ$-factorialit\'e}\label{section:def}

\subsection{}

Soit $X$ une vari\'et\'e normale. Notons $\Cl(X)$ le groupe des classes de 
diviseurs de Weil sur $X$ et $\Ca(X)$ le groupe des classes de diviseurs de Cartier sur $X$. Le morphisme de groupes $\Ca(X)\to\Cl(X)$ est injectif d'image 
l'ensemble des classes de diviseurs de Weil localement principaux. 
Rappelons que $X$ est dite \emph{localement factorielle} si tous ses anneaux locaux sont factoriels, autrement dit si {${\Ca(X) \to \Cl(X)}$} est un isomorphisme, 
et \emph{localement 
$\IQ$-factorielle} si tous ses anneaux locaux sont $\IQ$-factoriels (i.e. si le groupe des classes $\Cl(\cO_{X,x})$ est de torsion pour tout point $x$), autrement dit si le quotient 
$\Cl(X)/\Ca(X)$ est de torsion.

\subsection{}

Avec les m\^emes hypoth\`eses, notons $\Pic(X)$ le groupe de Picard de $X$, naturellement isomorphe \`a $\Ca(X)$ puisque $X$ est int\`egre. 
Rappelons que si $U\subset X$ est l'ouvert compl\'ementaire d'un ferm\'e de codimension sup\'erieure ou \'egale \`a $2$, le morphisme de restriction $\Cl(X)\to\Cl(U)$ 
est un isomorphisme. Rappelons le lemme bien connu:

\begin{lemma}\label{lemm:factpic}
Soient $X$ une vari\'et\'e normale, et $U$ son ouvert de lissit\'e.
Alors $X$ est localement factorielle si et seulement si le morphisme de restriction ${\Pic(X)\to\Pic(U)}$ est un isomorphisme.
\end{lemma}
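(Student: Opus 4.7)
The plan is to factor the restriction map $\Pic(X) \to \Pic(U)$ through the obvious comparison maps between Picard and Weil class groups, and then read off the equivalence from the definition of local factoriality.

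First I would observe that since $X$ is integral we have $\Pic(X) = \Ca(X)$, and similarly $\Pic(U) = \Ca(U)$. On the smooth open $U$ every Weil divisor is Cartier, so $\Ca(U) = \Cl(U)$. Furthermore, since $X$ is normal, the singular locus $X \setminus U$ has codimension at least $2$, so by the standard fact recalled just before the lemma the restriction $\Cl(X) \to \Cl(U)$ is an isomorphism.

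Putting these pieces together, the restriction map $\Pic(X) \to \Pic(U)$ factors as
\[
\Pic(X) = \Ca(X) \hookrightarrow \Cl(X) \xrightarrow{\sim} \Cl(U) = \Pic(U),
\]
where the first arrow is the canonical injection from classes of Cartier divisors to classes of Weil divisors. Hence $\Pic(X) \to \Pic(U)$ is an isomorphism if and only if $\Ca(X) \to \Cl(X)$ is, i.e.\ by the definition recalled in \S\ref{section:def}, if and only if $X$ is locally factorielle.

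There is no real obstacle here: the only things one must be careful about are (i) that the smooth locus $U$ is a dense open in $X$ (automatic because $X$ is a variety over a perfect field, so generically smooth), (ii) that $U$ is the complement of a closed subset of codimension $\geq 2$ (which uses normality of $X$, via Serre's criterion $R_1$), and (iii) that on a regular integral scheme $\Ca = \Cl$. All three facts are classical and used throughout the paper, so the proof reduces to the diagram above.
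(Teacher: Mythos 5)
Your proof is correct and is exactly the argument the paper has in mind: the lemma is stated there without proof (as ``bien connu''), and the three identifications you use --- $\Pic=\Ca$ on an integral scheme, $\Ca(U)=\Cl(U)$ on the smooth locus, and $\Cl(X)\xrightarrow{\sim}\Cl(U)$ because the singular locus of a normal variety has codimension at least $2$ --- are precisely the facts recalled in the paper immediately before the statement, so the restriction map factors as $\Pic(X)=\Ca(X)\hookrightarrow\Cl(X)\xrightarrow{\sim}\Cl(U)=\Pic(U)$ and the equivalence follows from the definition of local factoriality via $\Ca(X)\to\Cl(X)$. Nothing to add.
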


\section{Faisceaux r\'eflexifs}

\subsection{}

Soit $\cF$ un faisceau coh\'erent sur un sch\'ema noeth\'erien $X$. Rappelons que:
\begin{itemize}
\item $\cF$ est dit \emph{r\'eflexif} si l'application naturelle $\cF\to\cF^{\vee\vee}$ est un isomorphisme,
\item $\cF$ est dit \emph{normal} si pour tout ouvert $V\subset X$ et tout ferm\'e $Y\subset V$ de codimension au moins deux, 
la restriction $\cF(V)\to\cF(V\setminus Y)$ est bijective.
\end{itemize}

Lorsque $X$ est normal, un faisceau r\'eflexif de rang $1$ en tout point g\'en\'erique d'une composante irr\'eductible est appel\'e \emph{faisceau divisoriel}.  

Nous utiliserons les deux caract\'erisations suivantes de la r\'eflexivit\'e:

\begin{prop}{\cite[Proposition 1.1]{hartshorne2}}\label{prop:Exact}
Un faisceau coh\'erent $\cF$ sur un sch\'ema noeth\'erien et int\`egre $X$ est r\'eflexif si et seulement s'il s'ins\`ere, au moins localement, dans une suite exacte :
\[
0\to\cF\to\cE\to\cG\to 0,
\]
o\`u $\cE$ est localement libre et $\cG$ est sans torsion.
\end{prop}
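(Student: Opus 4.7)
Pour établir l'équivalence, on traite séparément les deux implications. Dans le sens direct, supposons $\cF$ réflexif, de sorte que $\cF \simeq \cF^{\vee\vee}$. Le faisceau $\cF^\vee$ étant cohérent sur $X$ noethérien, on choisit localement une présentation $\cE_1 \to \cE_0 \to \cF^\vee \to 0$ par des faisceaux localement libres de rang fini. En la dualisant (la dualité étant un foncteur contravariant exact à gauche), on obtient la suite exacte $0 \to \cF \to \cE_0^\vee \to \cE_1^\vee$. En définissant $\cG$ comme l'image de $\cE_0^\vee$ dans $\cE_1^\vee$, on obtient la suite exacte recherchée $0 \to \cF \to \cE_0^\vee \to \cG \to 0$ : $\cE_0^\vee$ est localement libre et $\cG$, sous-faisceau du faisceau localement libre $\cE_1^\vee$ sur le schéma intègre $X$, est sans torsion.

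Pour la réciproque, la réflexivité étant une propriété locale, il suffit de vérifier qu'en tout point $x$ le morphisme canonique $F \to F^{**}$ est bijectif, où l'on pose $R = \cO_{X,x}$, $F = \cF_x$, $E = \cE_x$, $G = \cG_x$. On dispose alors d'une suite exacte $0 \to F \to E \to G \to 0$ de $R$-modules de type fini avec $R$ local intègre noethérien, $E$ libre et $G$ sans torsion. La dualisation, utilisant $\mathrm{Ext}^1_R(E,R) = 0$, fournit la suite exacte
$$
0 \to G^* \to E^* \to F^* \to \mathrm{Ext}^1_R(G,R) \to 0.
$$
L'ingrédient crucial est que $\mathrm{Ext}^1_R(G,R)$ est un $R$-module de torsion (car $G$ devient libre après extension des scalaires au corps des fractions de $R$), d'où $\Hom_R(\mathrm{Ext}^1_R(G,R), R) = 0$ puisque $R$ est intègre.

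En scindant la suite exacte longue précédente en deux suites exactes courtes $0 \to G^* \to E^* \to H \to 0$ et $0 \to H \to F^* \to \mathrm{Ext}^1_R(G,R) \to 0$ (où $H$ désigne l'image de $E^* \to F^*$), la dualisation de la première donne $H^* = \ker(E \to G^{**})$, qui coïncide avec $F = \ker(E \to G)$ car $G \to G^{**}$ est injectif ($G$ étant sans torsion). La dualisation de la seconde fournit une injection $F^{**} \hookrightarrow H^* = F$, dont la composée avec le morphisme canonique $F \to F^{**}$ est l'identité de $F$ par fonctorialité de la bidualisation ; on en déduit la bijectivité cherchée. Le point le plus délicat sera la manipulation des dualisations itérées, en exploitant à deux reprises le caractère sans torsion de $G$ : d'abord pour l'annulation de $\Hom_R(\mathrm{Ext}^1_R(G,R), R)$, ensuite pour l'identification $\ker(E \to G^{**}) = F$.
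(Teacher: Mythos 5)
Votre démonstration est correcte. La proposition est cité dans l'article sans démonstration (renvoi à \cite[Proposition 1.1]{hartshorne2}), et votre argument est essentiellement celui de la référence : présentation locale de $\cF^\vee$ dualisée pour le sens direct, et pour la réciproque la suite exacte des $\Hom$ combinée au fait que $\mathrm{Ext}^1_R(G,R)$ est de torsion (donc de dual nul sur l'anneau intègre $R$) et à l'injectivité de $G\to G^{**}$. Les deux points délicats --- l'annulation de $\Hom_R(\mathrm{Ext}^1_R(G,R),R)$ et l'identification $H^*=\ker(E\to G^{**})=F$ compatible, via la naturalité de la bidualité, avec le morphisme canonique $F\to F^{**}$ --- sont correctement traités.
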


\begin{prop}{\cite[Proposition 1.6]{hartshorne2}}\label{prop:Refl}
Soit $\cF$ un faisceau coh\'erent sur un sch\'ema noeth\'erien int\`egre et normal. Les conditions suivantes sont \'equivalentes:
\begin{enumerate}
\item\label{prop:Refl1} $\cF$ est r\'eflexif;
\item\label{prop:Refl2} $\cF$ est sans torsion et normal;
\item\label{prop:Refl3} $\cF$ est sans torsion, et pour tout ouvert $W\subset X$ et tout ferm\'e $Y\subset W$ de codimension au moins deux, $\cF_W\cong j_*\cF_{W\setminus Y}$, o\`u $j\colon W\setminus Y\hookrightarrow W$ est l'immersion.
\end{enumerate}
\end{prop}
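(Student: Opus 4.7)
L'équivalence (\ref{prop:Refl2})$\Leftrightarrow$(\ref{prop:Refl3}) est essentiellement tautologique : pour $j\colon W\setminus Y\hookrightarrow W$ l'immersion ouverte, le morphisme canonique $\cF_W\to j_*\cF_{W\setminus Y}$ est un isomorphisme si et seulement si la restriction $\cF(V)\to\cF(V\setminus Y)$ est bijective pour tout ouvert $V\subset W$, ce qui n'est autre que la condition de normalité du faisceau $\cF$.

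Pour démontrer (\ref{prop:Refl1})$\Rightarrow$(\ref{prop:Refl2}), je m'appuierai sur la Proposition~\ref{prop:Exact}, qui fournit localement une suite exacte $0\to\cF\to\cE\to\cG\to 0$ avec $\cE$ localement libre et $\cG$ sans torsion. Comme sous-faisceau de $\cE$, le faisceau $\cF$ est sans torsion. Pour la normalité, soient $V$ un ouvert et $Y\subset V$ un fermé de codimension au moins deux. Le schéma $X$ étant normal donc $S_2$, le faisceau localement libre $\cE$ vérifie $\cE(V)\cong\cE(V\setminus Y)$ ; et $\cG$, sans torsion sur un schéma intègre, s'injecte dans sa fibre générique, d'où $\cG(V)\hookrightarrow\cG(V\setminus Y)$. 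Une chasse au diagramme élémentaire dans la suite exacte des sections donne alors $\cF(V)\cong\cF(V\setminus Y)$.

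Pour la réciproque (\ref{prop:Refl2})$\Rightarrow$(\ref{prop:Refl1}), l'idée est de comparer $\cF$ et $\cF^{\vee\vee}$ par restriction à un ouvert convenable. Puisque $X$ est $R_1$ en tant que schéma normal et qu'un module sans torsion sur un anneau de valuation discrète est libre, $\cF$ est localement libre sur un ouvert $U\subset X$ dont le complémentaire est de codimension au moins deux ; notons $j\colon U\hookrightarrow X$ l'immersion. Sur $U$, la restriction $\cF|_U$ est localement libre donc réflexive, d'où $\cF|_U\cong\cF^{\vee\vee}|_U$. La normalité supposée de $\cF$ fournit $\cF\cong j_*\cF|_U$. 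Il reste à établir que $\cF^{\vee\vee}$ est lui-même normal, auquel cas $\cF^{\vee\vee}\cong j_*\cF^{\vee\vee}|_U=j_*\cF|_U\cong\cF$, ce qui conclut.

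L'étape clé est ainsi la normalité de $\cF^{\vee\vee}$. Pour l'obtenir, je dualise une présentation locale $\cO^n\to\cO^m\to\cF^\vee\to 0$, ce qui inscrit $\cF^{\vee\vee}$ dans la suite exacte $0\to\cF^{\vee\vee}\to\cO^m\to\cO^n$ ; l'image du dernier morphisme est sans torsion comme sous-faisceau de $\cO^n$. La Proposition~\ref{prop:Exact} entraîne alors la réflexivité de $\cF^{\vee\vee}$, et l'implication (\ref{prop:Refl1})$\Rightarrow$(\ref{prop:Refl2}) déjà démontrée lui donne la normalité.
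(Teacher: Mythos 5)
The paper gives no proof of this proposition at all — it is quoted directly from Hartshorne — and your argument is a correct reconstruction of the standard one: (\ref{prop:Refl2})$\Leftrightarrow$(\ref{prop:Refl3}) by unwinding the definitions, (\ref{prop:Refl1})$\Rightarrow$(\ref{prop:Refl2}) via la présentation locale de la proposition~\ref{prop:Exact} et l'extension des sections d'un faisceau localement libre à travers un fermé de codimension $\geq 2$ sur un schéma normal (donc $S_2$), et (\ref{prop:Refl2})$\Rightarrow$(\ref{prop:Refl1}) en comparant $\cF$ et $\cF^{\vee\vee}$ sur le lieu de liberté locale, dont le complémentaire est bien de codimension $\geq 2$ grâce à $R_1$, après avoir établi que $\cF^{\vee\vee}$, étant un dual, est réflexif donc normal. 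Le seul point à expliciter est que la chaîne $\cF^{\vee\vee}\cong j_*\cF^{\vee\vee}|_U=j_*\cF|_U\cong\cF$ est bien l'inverse du morphisme canonique de bidualité $\cF\to\cF^{\vee\vee}$ : cela résulte de la naturalité de l'unité $\cG\to j_*j^*\cG$, puisque le morphisme de bidualité se restreint sur $U$ en celui du faisceau localement libre $\cF|_U$.
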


\begin{lemma}\label{lemm:refl}
Soient $X$ une vari\'et\'e normale et $j\colon V\hookrightarrow X$ l'immersion d'un ouvert tel que $\codim_X(X\setminus V)\geq 2$. Pour tout faisceau localement libre non nul $\cE$ sur $V$, $j_*\cE$ est un faisceau r\'eflexif sur $X$.
\end{lemma}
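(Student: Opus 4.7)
The plan is to identify $j_*\cE$ with the bidual of a coherent extension of $\cE$ to $X$, and then to invoke Proposition \ref{prop:Refl}. Since $X$ is noetherian, the coherent sheaf $\cE$ on the open subscheme $V$ admits a coherent extension to $X$ by the standard extension theorem for coherent sheaves on an open subscheme of a noetherian scheme. Choose such an extension $\widetilde{\cE}$, so that $\widetilde{\cE}|_V \cong \cE$, and form its bidual $\cF := \widetilde{\cE}^{\vee\vee}$. This $\cF$ is a coherent reflexive sheaf on the integral normal scheme $X$. Because dualization commutes with restriction to open subschemes, and because $\cE$ is locally free (hence canonically isomorphic to its own bidual), one obtains
$$
\cF|_V \;\cong\; (\widetilde{\cE}|_V)^{\vee\vee} \;\cong\; \cE^{\vee\vee} \;\cong\; \cE.
$$

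Applying then the reflexivity criterion of Proposition \ref{prop:Refl}, item (\ref{prop:Refl3}), to the reflexive sheaf $\cF$ with $W = X$ and $Y = X \setminus V$, which has codimension at least $2$ in $X$ by hypothesis, yields
$$
\cF \;\cong\; j_*(\cF|_V) \;\cong\; j_*\cE.
$$
Hence $j_*\cE$ is reflexive on $X$.

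The main obstacle worth flagging is the coherence of $j_*\cE$ itself: since $j$ is merely an open immersion, the pushforward of a coherent sheaf on $V$ need not be coherent on $X$ in general, and one cannot \emph{a priori} apply Proposition \ref{prop:Refl} to it. The proof sidesteps this difficulty by producing an intrinsically coherent reflexive sheaf $\cF$ on $X$ whose restriction to $V$ recovers $\cE$, and then using the characterization of reflexive sheaves to identify $\cF$ with $j_*\cE$ \emph{a posteriori}; the local freeness of $\cE$ is used exactly once, to ensure that the bidual operation does not alter $\cE$ upon restriction to $V$.
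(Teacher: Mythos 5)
Your proof is correct, but it takes a genuinely different route from the paper's. The paper first establishes the coherence of $j_*\cE$ by citing the finiteness theorem for direct images across a closed subset of codimension $\geq 2$ of a normal variety (Serre, Th\'eor\`eme 9, or SGA2 VIII 3.2), then checks directly that $j_*\cE$ is torsion-free and normal (the latter by reducing restriction maps to restriction maps for $\cE$ on $V$ across $Y\cap V$, which has codimension $\geq 2$, and using that $\cE$, being locally free, is reflexive hence normal), and concludes via the equivalence $(1)\Leftrightarrow(2)$ of Proposition \ref{prop:Refl}. You instead build a coherent reflexive sheaf $\cF=\widetilde{\cE}^{\vee\vee}$ on $X$ restricting to $\cE$ on $V$ and identify it with $j_*\cE$ via the implication $(1)\Rightarrow(3)$; this cleverly yields the coherence of $j_*\cE$ as a byproduct rather than as an input. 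The trade-off is that you need two other standard facts: the coherent extension theorem on noetherian schemes (fine), and the assertion that the bidual of a coherent sheaf on an integral noetherian scheme is reflexive --- you state this without justification, but it is Corollary 1.2 of the same paper of Hartshorne and follows readily from Proposition \ref{prop:Exact} (locally, dualizing a finite presentation $\cE_1\to\cE_0\to\widetilde{\cE}^\vee\to 0$ exhibits $\cF$ as the kernel of a map from a locally free sheaf to a torsion-free one), so it would be worth a one-line citation. Both arguments are of comparable length; yours has the mild aesthetic advantage of not invoking the depth-based finiteness theorem, the paper's that of being entirely self-contained given its quoted propositions.
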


\begin{proof}
La coh\'erence de $j_*\cE$ r\'esulte de~\cite[Th\'eor\`eme 9]{serre} (ou~\cite[VIII, Proposition 3.2]{sga2}) puisque $\codim_X(X\setminus V)\geq 2$. 
Il est clair que $j_*\cE$ est sans torsion et l'on v\'erifie ais\'ement qu'il est normal: 
la restriction $j_*\cE(W)\to j_*\cE(W\setminus Y)$ est par d\'efinition $\cE(W\cap V)\to\cE((W\setminus Y)\cap V)=\cE((W\cap V)\setminus (Y\cap V))$ et $Y\cap V$ est 
de codimension au moins deux dans $V$. Puisque $\cE$ est localement libre, il est r\'eflexif, donc normal en utilisant la proposition~\ref{prop:Refl}, et cette restriction est bijective.
\end{proof}

\subsection{} \label{faisceaudivisoriel}

Si $X$ est une vari\'et\'e normale et $i\colon U\hookrightarrow X$ l'immersion de son ouvert de lissit\'e, ce qui pr\'ec\`ede 
montre qu'on peut d\'efinir pour tout diviseur de Weil de $D$ un faisceau divisoriel $\cO_X(D)=i_\ast\cO_U(D_{|U})$: c'est 
l'\emph{extension r\'eflexive} du faisceau inversible sur $U$ associ\'e \`a $D$. Pour toute immersion 
$j\colon V\hookrightarrow~X$ d'un ouvert $V$ contenant $U$ et tout diviseur de Cartier $E$ sur $V$, si $D$ est une extension de $E$ \`a $X$ comme diviseur de Weil, la 
proposition~\ref{prop:Refl} montre que $\cO_X(D)\cong j_*\cO_V(E)$ puisque ces deux faisceaux sont \'egaux sur $U$ et sont r\'eflexifs. Le ferm\'e de $X$ sur lequel $\cO_X(D)$ n'est pas localement libre est donc exactement le lieu o\`u $D$ n'est pas localement principal (\ie n'est pas de Cartier).

\begin{lemma}
\label{lem:ramero}
Soient $S$ un sch\'ema, $X$ un sch\'ema noeth\'erien et int\`egre sur $S$ et $\cF$ un faisceau coh\'erent et sans torsion sur $X$. Soient $s$ un point de $S$ non n\'ecessairement ferm\'e et $x$ un point
 de $X$ sur $k(s)$. Alors le faisceau $\cF_s$ sur la fibre $X_s$ est inversible au point de $X_s$ d\'efini par $x$ si et seulement si $\cF$ l'est au point $x$.
\end{lemma}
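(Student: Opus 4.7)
La d\'emonstration se fait en deux temps. Le sens direct est simplement la stabilit\'e de l'inversibilit\'e par changement de base : si $\cF_x$ est libre de rang un sur $\cO_{X,x}$, alors $(\cF_s)_x=\cF_x\otimes_{\cO_{S,s}}k(s)$ l'est aussi comme $\cO_{X_s,x}$-module. Toute la difficult\'e se concentre dans la r\'eciproque.

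Posons $B=\cO_{X,x}$, $A=\cO_{S,s}$, $\km\subset A$ l'id\'eal maximal et $M=\cF_x$. L'identification $k(x)=k(s)$ fournit $\cO_{X_s,x}=B/\km B$ et $(\cF_s)_x=M/\km M$, que l'on suppose libre de rang un sur $B/\km B$. En r\'eduisant une nouvelle fois modulo l'id\'eal maximal de $B$, on obtient un $k(x)$-espace vectoriel de dimension un; le lemme de Nakayama appliqu\'e \`a un rel\`evement $m\in M$ d'une base de $M/\km M$ fournit alors une surjection $B\twoheadrightarrow M$, $1\mapsto m$, de noyau $I=\mathrm{Ann}_B(m)$. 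En comparant \`a l'isomorphisme $B/\km B\cong M/\km M$, il vient $I\subset \km B$.

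Il reste \`a prouver $I=0$, et c'est ici qu'intervient l'hypoth\`ese d'absence de torsion : puisque $m\neq 0$ et que $M$ est sans torsion sur $B$, tout \'el\'ement de $I=\mathrm{Ann}_B(m)$ est un diviseur de z\'ero de $B$. Dans les applications vis\'ees par ce lemme, $X$ est une vari\'et\'e int\`egre et $B$ est donc int\`egre : son seul diviseur de z\'ero est $0$, d'o\`u $I=0$ et $M\cong B$, c'est-\`a-dire $\cF$ inversible au point $x$.

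L'obstacle principal est pr\'ecis\'ement ce dernier passage, de l'inclusion $I\subset \km B$ \`a l'\'egalit\'e $I=0$ : il est imm\'ediat dans le cadre int\`egre, mais pour un sch\'ema $X$ quelconque dont l'anneau local $B$ admet des diviseurs de z\'ero, il faudrait recourir \`a une analyse plus fine (par exemple le lemme d'\'evitement appliqu\'e aux id\'eaux premiers associ\'es de $B$, ou une saturation de $I$ par rapport aux \'el\'ements r\'eguliers) pour exploiter pleinement l'hypoth\`ese que $\cF$ est sans torsion.
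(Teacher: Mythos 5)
Votre d\'emonstration est correcte et suit pour l'essentiel la m\^eme voie que celle de l'article : le lemme de Nakayama montre que $M=\cF_x$ est monog\`ene, puis l'absence de torsion donne la libert\'e. Le dernier passage qui vous inqui\`ete (de $I\subset\km B$ \`a $I=0$) est pr\'ecis\'ement celui que l'article exp\'edie en une ligne (\og comme il est sans torsion, c'est un module libre de rang $1$\fg), et puisque tous les sch\'emas auxquels ce lemme est appliqu\'e dans la suite sont int\`egres, votre restriction au cas int\`egre est sans cons\'equence.
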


\begin{proof}
Supposons que $\cF_s$ est localement libre de rang $1$ en $x$. Notons $M$ le $\cO_{X,x}$-module $\cF_x$ et $\km_s$ l'id\'eal maximal de $S$ en $s$. Le $\cO_{X_s,x}$-module $(\cF_s)_x$ 
vaut  $(\cF_s)_x=M\otimes_{\cO_{S,s}}k(s)\cong M/\km_sM$ et est localement libre de rang $1$. Soit $g$ un \'el\'ement de $M$ dont la classe engendre $M/\km_sM$: alors $M=g\cO_{X,x}+\km_sM$. Puisque l'id\'eal $\km_s\cO_{X,x}$ est inclus dans l'id\'eal maximal $\km_x$ de $\cO_{X,x}$, le lemme de Nakayama assure que $M$ est engendr\'e par $g$. Comme il est sans torsion, c'est un module libre de rang $1$. L'implication r\'eciproque est imm\'ediate.
\end{proof}

En particulier, si $\cF$ est un faisceau r\'eflexif de rang $1$ sur le produit $X\times_\ik Y$ de deux vari\'et\'es $X$ et $Y$, et 
si pour un point ferm\'e $x$ de $X$ le faisceau $\cF_{|\{x\}\times Y}$ sur $Y$ est inversible, alors $\cF$ est inversible en tout point de $\{x\}\times Y$.

\section{Vari\'et\'es d'Albanese et de Picard} \label{albanese}

\subsection{} Soit $X$ une $\ik$-vari\'et\'e projective normale. Nous rappellons dans cette section la construction d'une vari\'et\'e ab\'elienne $\Picard(X)$ qui param\`etre 
l'ensemble $\Cl_a(X)$ des 
classes d'\'equivalence lin\'eaire de diviseurs de Weil sur~$X$ alg\'ebriquement \'equivalents \`a z\'ero. Il est bien connu que la sous-vari\'et\'e r\'eduite associ\'ee 
au sch\'ema de Picard $\Pic^0(X)$ param\'etrant les faisceaux inversibles alg\'ebriquement \'equivalent \`a z\'ero sur $X$ 
est une vari\'et\'e ab\'elienne dont la vari\'et\'e duale n'est autre que la \textit{vari\'et\'e d'Albanese morphique} $\Alb_m(X)$ de $X$, qui factorise tout morphisme 
point\'e de $X$ vers une vari\'et\'e ab\'elienne. Il est donc naturel de consid\'erer la \textit{vari\'et\'e d'Albanese} $\Alb(X)$ de $X$ jouissant de la m\^eme propri\'et\'e 
pour les applications rationnelles (d\'efinie par exemple dans~\cite{serre}). Nous montrons, suivant Lang~\cite{lang}, que sa vari\'et\'e duale $\Picard(X)$ param\`etre les classes 
d'\'equivalence de diviseurs de Weil alg\'ebriquement \'equivalents 
\`a z\'ero.

Rappelons plus pr\'ecis\'ement la d\'efinition de la vari\'et\'e d'Albanese. Soit $a$ un $\ik$-point lisse de $X$. L'application d'Albanese $\alpha_X \colon X \dashrightarrow \Alb(X)$ 
est l'application rationnelle point\'ee universelle pour les applications rationnelles de $X$ vers une $\ik$-vari\'et\'e ab\'elienne envoyant $a$ sur $0$. L'ouvert de 
d\'efinition $V$ de cette application contient le lieu lisse $U$ de $X$.

La vari\'et\'e $\Picard (X)=\Alb(X)^\vee$ est une vari\'et\'e de Picard au sens de Lang~\cite[IV \S 4]{lang}. 
Nous donnons ici une pr\'esentation moderne de ce r\'esultat, en montrant que $\Picard(X)$ repr\'esente, dans la cat\'egorie des $\ik$-sch\'emas normaux de type fini et r\'eduits, le foncteur
 $P^0_X$ associant \`a tout $\ik$-sch\'ema $S$ le groupe des classes d'isomorphisme de faisceaux 
inversibles $\cL$ sur $U \times S$ munis d'une trivialisation $u\colon\cO_{\{a\}\times S}\buildrel\sim\over\to\cL_{|\{a\}\times S}$ au-dessus de $\{a\}\times S$ dont les restrictions aux fibres g\'eom\'etriques de la projection $U \times S \to S$ sont alg\'ebriquement \'equivalentes \`a z\'ero.

Notons $\cP$ le fibr\'e de Poincar\'e sur $\Alb(X) \times \Picard(X)$, et $\cE$ son image inverse sur $U_X \times \Picard(X)$. Ce fibr\'e d\'efinit une 
transformation naturelle de $\Picard(X)$ vers~$P^0_X$. 

\begin{prop}\label{picardWeil}
 La vari\'et\'e ab\'elienne $\Picard(X)$ repr\'esente la restriction du foncteur $P^0_X$ \`a la cat\'egorie des $\ik$-sch\'emas de type fini normaux.
\end{prop}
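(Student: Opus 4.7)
L'approche consiste \`a ramener l'\'enonc\'e au r\'esultat classique de Lang \cite[IV \S 4]{lang}, en s'appuyant sur l'isomorphisme $\Pic(U_X)\cong\Cl(X)$ qui d\'ecoule de la normalit\'e de $X$ et de $\codim_X(X\setminus U_X)\ge 2$.

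On v\'erifie d'abord que la transformation naturelle $\eta\colon\Picard(X)\to P^0_X$ induite par $\cE$ est bien d\'efinie : le fibr\'e de Poincar\'e $\cP$ \'etant rigidifi\'e le long de $\{0\}\times\Picard(X)$, et ses restrictions aux $\Alb(X)\times\{t\}$ \'etant alg\'ebriquement \'equivalentes \`a z\'ero, ces deux propri\'et\'es se transmettent \`a $\cE=(\alpha_X\times\id)^*\cP$ par tir\'e en arri\`ere via l'application d'Albanese restreinte \`a $U_X$, qui envoie $a$ sur $0$.

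On montrera ensuite que $\eta_S$ est bijective pour tout $\ik$-sch\'ema de type fini r\'eduit $S$. Sur les $\ik$-points, le th\'eor\`eme de Lang donne $\Picard(X)(\ik)\cong\Cl_a(X)$, tandis que l'isomorphisme $\Pic(U_X)\cong\Cl(X)$, compatible avec l'\'equivalence alg\'ebrique, identifie $\Cl_a(X)$ \`a $P^0_X(\Spec\ik)$. L'injectivit\'e en famille en r\'esulte : si $f,g\colon S\to\Picard(X)$ v\'erifient $\eta_S(f)\cong\eta_S(g)$, ils co\"incident sur les $\ik$-points, donc sur $S$ tout entier puisque $S$ est r\'eduit de type fini sur $\ik$ (les $\ik$-points y sont denses) et $\Picard(X)$ est s\'epar\'e.

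Le point d\'elicat est la surjectivit\'e en famille : \'etant donn\'e $\cL\in P^0_X(S)$, il faut construire $\varphi\colon S\to\Picard(X)$ tel que $(\id_{U_X}\times\varphi)^*\cE\cong\cL$. C'est pr\'ecis\'ement l\`a qu'on invoque la construction de Lang : la donn\'ee de $\cL$ correspond, via le passage des fibr\'es inversibles sur $U_X\times S$ aux classes de diviseurs de Weil sur $X\times S$, \`a une famille alg\'ebrique de diviseurs alg\'ebriquement \'equivalents \`a z\'ero sur $X$ param\'etr\'ee par $S$, et de telles familles sont exactement classifi\'ees par des morphismes vers $\Alb(X)^\vee=\Picard(X)$. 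L'obstacle principal r\'eside dans la v\'erification minutieuse que le formalisme moderne (fibr\'es sur $U_X\times S$ rigidifi\'es et fibralement alg\'ebriquement triviaux) correspond exactement au formalisme classique des familles alg\'ebriques de diviseurs de Weil employ\'e par Lang.
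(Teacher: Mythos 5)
Votre r\'eduction au cas des $\ik$-points est un bon point de d\'epart, et votre argument d'injectivit\'e en famille (densit\'e des points ferm\'es dans un sch\'ema r\'eduit de type fini sur $\ik$, s\'eparation de $\Picard(X)$) est correct ; la bijectivit\'e de $\alpha(\ik)$ est d'ailleurs aussi le point de d\'epart de l'article. Mais la surjectivit\'e en famille, que vous identifiez vous-m\^eme comme le point d\'elicat, est pr\'ecis\'ement l'\'enonc\'e \`a d\'emontrer, et on ne peut pas l'obtenir en invoquant simplement \og la construction de Lang\fg : les r\'esultats de \cite{lang} sont formul\'es dans les fondements de Weil, pour des familles param\'etr\'ees par des \emph{vari\'et\'es}, et ne fournissent a priori qu'une application \emph{rationnelle} $S \dashrightarrow \Picard(X)$ ; ils ne disent rien des sch\'emas de param\`etres r\'eductibles ou non normaux, et ne donnent pas l'isomorphisme $(\id\times\varphi)^*\cE\cong\cL$ sur $U_X\times S$ tout entier (par opposition \`a une \'egalit\'e fibre \`a fibre des classes de diviseurs, qui ne d\'etermine le fibr\'e qu'\`a un facteur provenant de $S$ pr\`es, d'o\`u l'utilit\'e de la rigidification le long de $\{a\}\times S$, qu'il faudrait exploiter par un argument de type \og see-saw\fg). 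Votre plan ne propose aucun moyen concret de franchir ces obstacles : tel quel, il constitue une p\'etition de principe.

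C'est exactement ce que le crit\`ere de repr\'esentabilit\'e de Murre \cite[I.6.1]{murre}, utilis\'e dans l'article, est con\c{c}u pour traiter. La propri\'et\'e P${}_6$ (existence du sous-sch\'ema ferm\'e maximal de trivialit\'e, construite ici au moyen des extensions r\'eflexives de $\widetilde{\cL}$ et $\widetilde{\cL}^{-1}$ et d'un passage \`a la normalisation) et surtout la propri\'et\'e P${}_7$ (condition de module, v\'erifi\'ee gr\^ace au th\'eor\`eme de Bertini de Gabber, qui produit une courbe projective lisse $Y\subset U$ telle que $\Alb(Y)\to\Alb(U)$ soit surjectif \`a noyau connexe, puis au th\'eor\`eme de Rosenlicht) sont pr\'ecis\'ement ce qui permet de prolonger en morphismes de sch\'emas les donn\'ees issues de la th\'eorie classique, le recollement \'etant assur\'e par descente fid\`element plate (P${}_4$ et P${}_5$) ; il faut enfin traiter les transformations issues de groupes alg\'ebriques commutatifs quelconques (propri\'et\'e (i)), ce que l'article fait via le th\'eor\`eme de Chevalley. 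Sans un substitut \`a l'ensemble de ces v\'erifications, votre \'etape de surjectivit\'e reste un v\'eritable trou dans la d\'emonstration.
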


\begin{proof} Montrons d'abord que la transformation naturelle $\varphi \colon \Picard(X)\lra P^0_X$ associ\'ee \`a $\cE$ d\'efinit, pour toute extension alg\'ebriquement close $K$ de $\ik$, 
un isomorphisme de $\Picard(X)(K)$ sur $P^0_X(K)$. 

Rappelons que la vari\'et\'e d'Albanese $\Alb(X_K)$ de 
$X_K$ est isomorphe \`a $\Alb(X)_K$, o\`u $Y_K$ d\'esigne le $K$-sch\'ema $Y \times_{\Spec \ik} {\Spec K}$ d\'eduit par extension des scalaires d'un $\ik$-sch\'ema $Y$ (voir~\cite[Lemma 2.3]{gabber}).
Le morphisme $\varphi(K)$ s'identifie alors \`a ${(\alpha_X)_K}^\ast \colon \Pic^0(\Alb(X)_K) \to P^0_X(K) \subset \Pic(U_K)$.

L'injectivit\'e de ${(\alpha_X)_K}^\ast$ s'obtient par r\'eduction au cas des courbes. D'apr\`es~\cite[Proposition 2.4]{gabber}, il existe une courbe $C$ projective lisse sur $K$ contenue dans $U_K$ (passant par $a$) telle que l'application $\Alb(C) \to \Alb(U_K)$ est surjective et de 
noyau lisse et connexe. Le morphisme $\Pic^0(\Alb(U)) \to \Pic^0(\Alb(C))$ est donc injectif, et la conclusion r\'esulte de l'isomorphisme 
$\Pic^0(\Alb(C)) \simeq \Pic^0(C)$.

Montrons maintenant la surjectivit\'e. Soit $L$ un faisceau inversible sur $U_K$ alg\'ebriquement \'equivalent \`a z\'ero. Il existe une courbe $\Gamma$ projective lisse irr\'eductible sur $K$, un faisceau inversible $\cL$ sur 
$U_K \times \Gamma$ et deux points $t$ et $t'$ de $\Gamma(K)$ tels que $\cL_t$ soit isomorphe \`a $L$ et $\cL_{t'}$ \`a $\cO_{U_K}$. Le faisceau 
$\cL \otimes p_\Gamma^\ast \cL_a^{-1}$ d\'efinit un morphisme de $U_K$ vers la jacobienne $J_\Gamma$ de $\Gamma$ envoyant 
$a$ sur $0$, qui induit un morphisme $f\colon \Alb(X)_K \lra J_\Gamma$. Le faisceau $L$ s'obtient alors comme image inverse via $(\alpha_X)_K$ de $f^\ast ({\cP_\Gamma})_t$, 
o\`u $\cP_\Gamma$ d\'esigne le fibr\'e de Poincar\'e 
sur $J_\Gamma \times \Gamma$ v\'erifiant $({\cP_\Gamma})_{t'} \simeq  \cO_{J_\Gamma}$.

Pour d\'efinir la transformation inverse $\psi \colon P^0_X \lra \Picard(X)$, on consid\`ere \`a nouveau la jacobienne de la courbe projective lisse $C$ consid\'er\'ee plus haut. La vari\'et\'e 
$\Picard(X)$ se plonge donc dans la jacobienne $J_C=\Alb(C)^\vee$ de $C$.

Soient $S$ un sch\'ema normal de type fini, et $\cL \in P_X^0(S)$ une famille de faisceaux inversibles sur $U$ alg\'ebriquement \'equivalents \`a z\'ero param\'etr\'ee par $S$ et 
munie d'un isomorphisme $\cO_S \buildrel\sim\over\to\cL_{|\{a\}\times S}$. Sa restriction 
$\cL_{|C \times S}$ d\'efinit un \'el\'ement de $P_C^0(S)$, donc 
un morphisme $S \to J_C$. Comme $S$ est r\'eduit et que $K$ est alg\'ebriquement clos, ce morphisme se factorise \`a travers $\Picard(X) \subset J_C$. Soit $c_\cL$ le morphisme $S \to \Picard(X)$ ainsi obtenu.

V\'erifions que $(\id \times c_\cL)^\ast\cE$ est isomorphe \`a $\cL$. La restriction \`a $C \times \{s\}$ du faisceau inversible 
$\cM=\cL^{-1} \otimes (\id \times c_\cL)^\ast\cE$ est triviale pour tout $s \in S$. Soit $\eta$ un point maximal de $S$ et $\overline{k(\eta)}$ une cl\^oture alg\'ebrique de $k(\eta)$. Puisque le morphisme de restriction $P^0_X(\overline{k(\eta)}) \to P^0_C(\overline{k(\eta)})$, qui s'identifie \`a 
$\Picard(X)(\overline{k(\eta)}) \to J_C(\overline{k(\eta)})$, est injectif, 
le tir\'e en arri\`ere sur $U_{\overline{k(\eta)}}$ du faisceau $\cM_\eta\coloneqq\cM_{|U\times\{\eta\}}$ est trivial, donc l'espace des sections de $\cM_\eta$ est de dimension un et fournit une trivialisation de $\cM_\eta$.

Puisque $S$ est normal, il r\'esulte alors de~\cite[21.4.13 Err${}_{\text{IV}}$ 53]{egaIV4} que $\cM$ provient d'un faisceau inversible $\cN$ sur $S$, qui est n\'ecessairement trivial.
\end{proof}

\begin{rem}\label{directproof}
 Le fibr\'e de Poincar\'e sur le produit $X \times \Pic^0(X)_{\mathrm{red}}$ convenablement rigidifi\'e d\'efinit une application $\Pic^0(X)_{\mathrm{red}} \to \Picard(X)$. 
On obtient par dualit\'e une 
application $\Alb(X) \to \Alb_m(X)$, qui est pr\'ecis\'ement l'application associ\'ee au morphisme universel $X \to \Alb_m(X)$. On en d\'eduit aussit\^ot que l'application 
d'Albanese $X \dashrightarrow \Alb(X)$ est partout d\'efinie 
si et seulement si tout diviseur de Weil alg\'ebriquement \'equivalent \`a z\'ero est Cartier, donc en particulier si $X$ est localement factorielle.

Lorsque le corps de base $\ik$ n'est pas la cl\^oture alg\'ebrique d'un corps fini, ce r\'esultat s'\'etend aux vari\'et\'es localement $\IQ$-factorielles. 
Il r\'esulte en effet de~\cite[Th\'eor\`eme 6]{serre} que le noyau du morphisme $\Alb(X) \to \Alb_m(X)$ est lisse et connexe. S'il est non nul, on en d\'eduit par dualit\'e que
 le quotient $\Picard(X)/\Pic^0(X)_\mathrm{red}$ est une vari\'et\'e ab\'elienne non triviale, qui contient d'apr\`es~\cite[Theorem 10.1]{freyjarden}
un $\ik$-point de non--torsion. Un tel point d\'efinit un diviseur de Weil sur $X$ qui n'est pas $\IQ$-Cartier: l'application d'Albanese ${X \to \Alb(X)}$ est donc partout 
d\'efinie si $X$ est 
une vari\'et\'e projective localement $\IQ$-factorielle d\'efinie sur un corps alg\'ebriquement clos qui n'est pas la cl\^oture alg\'ebrique d'un corps fini.

Remarquons qu'il existe des vari\'et\'es projectives non localement factorielles dont l'application d'Albanese est partout d\'efinie. C'est le cas des vari\'et\'es 
unirationnelles non localement factorielles, telles que les espaces 
projectifs \`a poids singuliers, ou encore les espaces de modules de fibr\'es $G$-principaux semi-stables sur une courbe projective de 
genre $g \geqslant 2$, pour tout groupe
semi-simple simplement connexe $G$ qui n'est pas sp\'ecial au sens de Serre (voir~\cite{BLS}).

Plus g\'en\'eralement, on obtient facilement des vari\'et\'es projectives non localement $\IQ$-factorielles dont l'application d'Albanese est partout d\'efinie en 
consid\'erant certaines 
vari\'et\'es toriques (voir~\cite[\S3]{fulton}).
\end{rem}

\subsection{}

Nous appelons ici \emph{groupe de N\'eron--Severi} de $X$ le groupe des classes d'\'equivalence alg\'ebrique de diviseurs de Weil 
sur $X$, not\'e $\NS(X)=\Cl(X)/\Cl_a(X)$. Puisque $X$ est normale et projective sur $\ik$, $\NS(X)$ est un groupe de type fini, par exemple d'apr\`es~\cite[Th\'eor\`eme 3]{kahn}.

Dans le cas o\`u $\ik$ n'est pas la cl\^oture alg\'ebrique d'un corps fini, on en d\'eduit:

\begin{cor}\label{cor:m-fact}
Une vari\'et\'e projective localement $\IQ$-factorielle d\'efinie sur un corps alg\'ebriquement clos qui n'est pas la cl\^oture alg\'ebrique d'un corps fini est 
$m$-factorielle pour un certain $m>0$.
\end{cor}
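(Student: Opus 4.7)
Mon plan est de combiner la Remarque~\ref{directproof} avec la finitude du groupe de Néron--Severi, selon le schéma suivant : d'abord montrer que $\Cl_a(X) \subset \Ca(X)$, puis borner uniformément l'ordre des éléments de $\Cl(X)/\Ca(X)$.

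Plus précisément, je commencerais par appliquer la Remarque~\ref{directproof} sous les hypothèses (variété projective localement $\IQ$-factorielle, corps de base $\ik$ qui n'est pas la clôture algébrique d'un corps fini). L'argument qui y est donné prouve non seulement que l'application d'Albanese $X \dashrightarrow \Alb(X)$ est partout définie, mais également que le noyau du morphisme $\Alb(X) \to \Alb_m(X)$ (connexe d'après Serre) est trivial : sans quoi le quotient dual $\Picard(X)/\Pic^0(X)_{\mathrm{red}}$ serait une variété abélienne non triviale, qui contiendrait un point de non-torsion définissant un diviseur de Weil non $\IQ$-Cartier sur $X$. Par dualité, $\Pic^0(X)_{\mathrm{red}} \to \Picard(X)$ est un isomorphisme, et la Proposition~\ref{picardWeil} traduit ce fait en l'inclusion $\Cl_a(X) \subset \Ca(X)$ : tout diviseur de Weil algébriquement équivalent à zéro est déjà de Cartier.

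On dispose alors d'une surjection canonique $\NS(X) = \Cl(X)/\Cl_a(X) \twoheadrightarrow \Cl(X)/\Ca(X)$. Or $\NS(X)$ est un groupe abélien de type fini (puisque $X$ est normale et projective sur $\ik$), tandis que $\Cl(X)/\Ca(X)$ est de torsion par l'hypothèse de $\IQ$-factorialité locale. Le quotient $\Cl(X)/\Ca(X)$ est donc un groupe abélien de torsion et de type fini, c'est-à-dire \emph{fini}. Il ne reste alors plus qu'à prendre pour $m$ son exposant (ou son cardinal) pour conclure : $mD$ est de Cartier pour tout diviseur de Weil $D$ sur $X$, c'est-à-dire que $X$ est $m$-factorielle.

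Le seul point délicat est le premier, c'est-à-dire la traduction de la $\IQ$-factorialité locale en la trivialité du quotient $\Picard(X)/\Pic^0(X)_{\mathrm{red}}$ : on y utilise de manière essentielle à la fois la structure de variété abélienne (pour invoquer l'existence de points de non-torsion sur $\ik$ via \cite{freyjarden}, d'où l'hypothèse sur le corps) et la Proposition~\ref{picardWeil} (pour interpréter les $\ik$-points de $\Picard(X)$ comme classes de diviseurs de Weil). Une fois cette étape acquise, le passage à l'existence d'un $m$ uniforme est purement formel, ne reposant que sur le fait classique qu'un groupe abélien de torsion de type fini est fini.
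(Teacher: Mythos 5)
Votre preuve est correcte et suit essentiellement la même démarche que celle du texte : on déduit de la Remarque~\ref{directproof} que $\Cl_a(X)\subset\Ca(X)$, puis on utilise la finitude du type de $\NS(X)$ pour borner l'ordre des éléments du quotient $\Cl(X)/\Ca(X)$. Vous explicitez simplement davantage le second point (surjection $\NS(X)\twoheadrightarrow\Cl(X)/\Ca(X)$, groupe de torsion de type fini donc fini), que le texte laisse implicite.
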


\begin{proof}
 D'apr\`es la remarque~\ref{directproof}, tout diviseur de Weil sur $X$ alg\'ebriquement \'equivalent \`a z\'ero est Cartier. Puisque le groupe de N\'eron--Severi $\NS(X)$ est de type 
fini, il existe un entier $m>0$ tel que tout diviseur de Weil est $m$-Cartier.
\end{proof}

\section{Ouverture de la factorialit\'e locale}\label{factouvert}

Nous notons $X^\fact$ le \emph{lieu de factorialit\'e locale} d'un sch\'ema $X$, d\'efini comme l'ensemble des points $x\in X$ tels que $\cO_{X,x}$ est factoriel. Rappelons que sur un sch\'ema noeth\'erien
int\`egre et localement factoriel, tout faisceau coh\'erent r\'eflexif de rang un est inversible (voir \cite[Proposition 1.9]{hartshorne2}). Plus g\'en\'eralement:

\begin{lemma}\label{lem:extension} Soient $X$ une vari\'et\'e normale et $U$ son ouvert de lissit\'e. Le lieu de factorialit\'e locale de $X$ est le lieu sur lequel l'extension 
r\'eflexive de tout faisceau inversible sur $U$ reste localement libre.
\end{lemma}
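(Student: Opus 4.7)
The plan is to prove the two inclusions between $X^\fact$ and the locus where every reflexive extension of an invertible sheaf on $U$ remains locally free, using the dictionary between Weil divisors and reflexive sheaves of rank one recalled in \S\ref{faisceaudivisoriel}.

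For the inclusion $X^\fact \subset \{x \mid i_*\cL \text{ est localement libre en } x \text{ pour tout } \cL\}$, let $x \in X^\fact$ and let $\cL$ be an invertible sheaf on $U$. By Lemme \ref{lemm:refl}, $i_*\cL$ is a reflexive coherent sheaf of rank one on $X$. Since $\cO_{X,x}$ is factorial, the result recalled just before the lemma (\cite[Proposition 1.9]{hartshorne2}) says that every reflexive module of rank one over $\cO_{X,x}$ is free, so $(i_*\cL)_x$ is free of rank one, meaning $i_*\cL$ is locally free at $x$.

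For the reverse inclusion, suppose $x$ is a point such that $(i_*\cL)_x$ is free for every invertible sheaf $\cL$ on $U$. To show that $\cO_{X,x}$ is factorial, it suffices to show that every divisor class in $\Cl(\cO_{X,x})$ is trivial, i.e., that every prime Weil divisor on $\Spec \cO_{X,x}$ is principal. Any height-one prime of $\cO_{X,x}$ corresponds to a codimension-one point of $X$ specializing to $x$; its schematic closure in $X$ is a prime Weil divisor $D$ on $X$ whose restriction to $\Spec \cO_{X,x}$ is the original one. Since $U$ is regular, $D|_U$ is Cartier, and $\cL = \cO_U(D|_U)$ is an invertible sheaf on $U$ whose reflexive extension is precisely $\cO_X(D) = i_*\cL$ (see \S\ref{faisceaudivisoriel}). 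By hypothesis this sheaf is locally free at $x$, which is exactly saying that $D$ is locally principal at $x$. Thus the given class in $\Cl(\cO_{X,x})$ is trivial.

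The only step worth spelling out carefully is the lifting of a Weil divisor from $\Spec \cO_{X,x}$ to a global Weil divisor on $X$; this is not really an obstacle, since on the normal scheme $X$ the height-one primes of $\cO_{X,x}$ are in bijection with the codimension-one points of $X$ specializing to $x$, and one extends by taking closures. No further machinery is required beyond Lemme \ref{lemm:refl} and the reflexive-sheaf/Weil-divisor correspondence of \S\ref{faisceaudivisoriel}.
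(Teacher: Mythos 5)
Your proof is correct, and it is exactly the argument the paper leaves implicit: the forward inclusion is the localized form of \cite[Proposition 1.9]{hartshorne2} applied to the reflexive sheaf $i_*\cL$ of Lemme \ref{lemm:refl}, and the converse uses the dictionary of \S\ref{faisceaudivisoriel} between Weil divisors and their divisorial sheaves together with the fact that a noetherian normal local ring is factorial if and only if each of its height-one primes (each obtained by restricting the closure of a codimension-one point of $X$ specializing to $x$) is principal. Nothing is missing.
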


\begin{theorem}\label{theo:gros1}
Le lieu de factorialit\'e locale d'une vari\'et\'e est ouvert.
\end{theorem}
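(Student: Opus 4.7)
On se ram\`ene d'abord au cas o\`u $X$ est projective normale. La factorialit\'e locale \'etant une propri\'et\'e locale et impliquant la normalit\'e, on peut travailler sur un ouvert affine normal, puis le plonger comme sous-ouvert d'une vari\'et\'e projective normale (en compactifiant et normalisant).

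Soit alors $U=U_X$ l'ouvert de lissit\'e. D'apr\`es le lemme \ref{lem:extension}, $X^\fact$ est l'ensemble des points o\`u tout faisceau divisoriel $\cO_X(D)$ est localement libre. Le groupe $\NS(X)$ \'etant de type fini, on choisit des diviseurs $D_1,\ldots,D_r$ dont les classes l'engendrent, et on note $F_i\subset X$ le ferm\'e o\`u $\cO_X(D_i)$ n'est pas localement libre. La classe de tout diviseur de Weil $D$ s'\'ecrit alors $\sum n_i[D_i]+[E]$ avec $E$ alg\'ebriquement \'equivalent \`a z\'ero; comme la somme de deux diviseurs de Cartier en un point est encore de Cartier, il suffit de contr\^oler le lieu o\`u un diviseur alg\'ebriquement \'equivalent \`a z\'ero n'est pas de Cartier.

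Ces diviseurs sont param\'etr\'es par la vari\'et\'e ab\'elienne $\Picard(X)$ (proposition \ref{picardWeil}), et le fibr\'e $\cE$ sur $U\times\Picard(X)$ est universel. On consid\`ere son extension r\'eflexive $\widetilde\cE$ \`a $X\times\Picard(X)$, ainsi que le ferm\'e $Z\subset X\times\Picard(X)$ o\`u $\widetilde\cE$ n'est pas localement libre. Comme $\Picard(X)$ est propre sur $\ik$, la projection $\pi\colon X\times\Picard(X)\to X$ est propre et $\pi(Z)$ est ferm\'e dans $X$. Il reste alors \`a \'etablir l'\'egalit\'e $X\setminus X^\fact=\bigcup_iF_i\cup\pi(Z)$, qui fournit imm\'ediatement le th\'eor\`eme.

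L'obstacle principal r\'eside dans la comparaison entre la restriction $\widetilde\cE_{|X\times\{p\}}$ (a priori ni r\'eflexive ni sans torsion) et le faisceau divisoriel $\cO_X(D_p)$. Le lemme \ref{lem:ramero} joue ici un r\^ole cl\'e: il d\'etecte l'inversibilit\'e de $\widetilde\cE$ en un point $(x,p)$ via la restriction \`a la fibre $X\times\{p\}$, en n'exigeant que le caract\`ere sans torsion de $\widetilde\cE$ sur $X\times\Picard(X)$. Combin\'e \`a l'unicit\'e de l'extension r\'eflexive d'un fibr\'e inversible sur un ouvert dont le compl\'ementaire est de codimension au moins deux, il permet d'identifier l'inversibilit\'e de $\cO_X(D_p)$ en $x$ avec l'appartenance de $(x,p)$ au compl\'ementaire de $Z$.
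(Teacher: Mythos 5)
Votre sch\'ema g\'en\'eral (r\'eduction au cas projectif normal, r\'eduction via $\NS(X)$ aux diviseurs alg\'ebriquement \'equivalents \`a z\'ero, \'etude du lieu de non-libert\'e $Z$ de l'extension r\'eflexive de $\cE$ sur $X\times\Picard(X)$, propret\'e de $\Picard(X)$) est exactement celui du papier. Mais l'\'egalit\'e $X\setminus X^\fact=\bigcup_iF_i\cup\pi(Z)$ que vous d\'eclarez \og restant \`a \'etablir\fg{} contient pr\'ecis\'ement toute la difficult\'e, et les outils que vous invoquez ne suffisent pas \`a la d\'emontrer. L'inclusion $X\setminus X^\fact\subseteq\bigcup_iF_i\cup\pi(Z)$ s'obtient bien comme vous le dites : si $\widetilde\cE$ est inversible en $(x,p)$, sa restriction \`a un voisinage de $x$ dans $X\times\{p\}$ est un faisceau inversible co\"incidant avec $\cO_U(D_p)$ sur $U$, donc \'egal \`a $\cO_X(D_p)$ par unicit\'e de l'extension r\'eflexive. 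En revanche, l'inclusion r\'eciproque $\pi(Z)\subseteq X\setminus X^\fact$ exige de montrer que si $\cO_X(D_p)$ est inversible en $x$ alors $\widetilde\cE$ est inversible en $(x,p)$. Le lemme \ref{lem:ramero} ram\`ene cette question \`a l'inversibilit\'e en $x$ de $\widetilde\cE_{|X\times\{p\}}$ --- mais ce faisceau n'est pas $\cO_X(D_p)$ : la formation de l'image directe $i_*$ (donc de l'extension r\'eflexive) ne commute pas au changement de base $\{p\}\hookrightarrow\Picard(X)$, et le morphisme naturel $\widetilde\cE_{|X\times\{p\}}\to\cO_X(D_p)$ peut avoir un noyau et un conoyau support\'es sur $X\setminus U$. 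Sans cette inclusion, vous n'obtenez que $X\setminus(\bigcup_iF_i\cup\pi(Z))\subseteq X^\fact$, ce qui ne donne pas l'ouverture de $X^\fact$.

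C'est exactement pour contourner cet obstacle que le papier d\'eploie ses deux premi\`eres \'etapes. D'abord, le th\'eor\`eme de Ramanujam--Samuel (combin\'e au lemme \ref{lem:ramero} appliqu\'e dans l'autre sens, au point g\'en\'erique de $\pi(W)$) montre que $Z$ est un cylindre $Z_0\times\Picard(X)$, et la propri\'et\'e universelle du fibr\'e de Poincar\'e identifie $X\setminus Z_0$ \`a l'ouvert de d\'efinition $V$ de l'application d'Albanese. Ensuite, un argument de platitude g\'en\'erique fournit un ouvert dense $\Omega\subset\Picard(X)$ au-dessus duquel la restriction de $\cL$ aux fibres $X\times[D]$ est r\'eflexive, donc \'egale \`a $\cO_X(D)$ ; pour $[D]\in\Omega$ le lieu d'inversibilit\'e de $\cO_X(D)$ est alors exactement $V$, et comme $\Omega$ engendre le groupe $\Picard(X)$, on en d\'eduit que tout diviseur alg\'ebriquement \'equivalent \`a z\'ero est de Cartier sur $V$ et qu'en tout point de $X\setminus V$ l'un d'eux ne l'est pas. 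Ce sont ces deux ingr\'edients (Ramanujam--Samuel, et le couple platitude g\'en\'erique / structure de groupe de $\Picard(X)$) qui manquent \`a votre esquisse ; en l'\'etat, l'identification que vous annoncez dans le dernier paragraphe est affirm\'ee mais non d\'emontr\'ee.
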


\begin{proof}
Soit $X$ une vari\'et\'e d\'efinie sur un corps $\ik$. Puisque le lieu des points o\`u $X$ est normale est ouvert d'apr\`es~\cite[\S 6.13]{egaIV2}, 
on peut supposer $X$ normale. La question \'etant locale, on peut supposer que $X$ est affine. Soit $\overline{X}$ l'adh\'erence de $X$ dans un espace projectif. 
Puisque $X$ est normale, elle se plonge comme un ouvert dans la normalis\'ee $\overline{X}^{\nu}$ de $\overline{X}$. 
On peut donc finalement supposer que $X$ est projective sur $\ik$.
\par{\it Premi\`ere \'etape. }
Soient $U$ l'ouvert de lissit\'e de $X$, $a$ un point de $U$ fix\'e et ${\alpha_X\colon (X,a)\dashrightarrow (\Alb(X),0)}$ l'application point\'ee universelle. 
Notons $V$ son ouvert de d\'efinition, qui contient $U$, et $j\colon V\hookrightarrow~X$ l'immersion. Soit $\cP$ le fibr\'e de Poincar\'e sur $\Alb(X)\times\Picard(X)$. 
Consi\-d{\'e}rons les morphismes:
\[
\xymatrix{V\times\Picard(X)\ar[r]^-{\alpha_X\times\id}\ar[d]_{j\times\id} & \Alb(X)\times\Picard(X)\\X\times\Picard(X)}
\]
et posons:
\begin{align*}
\cE&=(\alpha_X\times\id)^*\cP\in\Pic(V\times\Picard(X)),\\
\cL&=(j\times\id)_*\cE.
\end{align*}
D'apr\`es le lemme~\ref{lemm:refl}, $\cL$ est un faisceau r\'eflexif.

Soient $W$ l'ouvert de $X\times\Picard(X)$ sur lequel $\cL$ est localement libre et $Z$ son compl\'ementaire. Par la projection $\pi\colon X\times\Picard(X)\to X$, 
l'image $W_0=\pi(W)$ est un ouvert de $X$ ; posons aussi $Z_0=\pi(Z)$.
Le ferm\'e $Z$ ne contient aucune composante irr\'eductible d'une fibre au-dessus d'un point de $W_0$: en effet, ces fibres sont irr\'eductibles et l'intersection avec $W$ 
y d\'efinit des ouverts non vides.
 Au point g\'en\'erique $\eta$ de $W_0$, la restriction $\cL_\eta$ de $\cL$
\`a $\{\eta\}\times\Picard(X)$ est un faisceau r\'eflexif car le changement de base $\{\eta\} \hookrightarrow
W_0$ est plat: c'est donc un faisceau inversible en codimension $1$.
D'apr\`es le lemme~\ref{lem:ramero}, la trace de $Z$ sur la fibre en $\eta$
est donc de codimension au moins $2$.
Le th\'eor\`eme de Ramanujam-Samuel~\cite[21.14.3 (ii)]{egaIV4} (voir aussi~\cite[III.1.6]{boutot}) s'applique alors, concluant que la paire $(W_0\times\Picard(X),Z\cap(W_0\times\Picard(X)))$ est parafactorielle. 
Il en r\'esulte que l'application de restriction:
\[
\Pic(W_0\times\Picard(X))\to\Pic((W_0\times\Picard(X))\setminus (Z\cap(W_0\times\Picard(X))))
\]
est surjective. Puisque 
$\cL$ est localement libre sur $(W_0\times\Picard(X))\setminus Z\subset W$, il d\'efinit un faisceau inversible sur $W_0\times\Picard(X)$,
 qui n'est autre que la restriction de $\cL$ \`a cet ouvert d'apr\`es la proposition~\ref{prop:Refl}. Cela montre que $Z_0=X\setminus W_0$.
Autrement dit, le lieu $Z$ o\`u $\cL$ n'est pas localement libre est l'image r\'eciproque du ferm\'e $Z_0$ de $X$ par la projection $\pi$, soit $Z=Z_0\times\Picard(X)$, et on a $Z_0\subset X\setminus V$.

R\'eciproquement, le fibr\'e $\cL$ \'etant localement libre sur $(X\setminus Z_0)\times \Picard(X)$, par propri\'et\'e universelle de $\widehat{\Picard(X)}$, il existe un unique morphisme:
\[
(X\setminus Z_0)\xrightarrow{f} \widehat{\Picard(X)}=\Alb(X)
\]
tel que $f^*\cP\cong \cL_{|(X\setminus Z_0)\times\Picard(X)}$. Par la propri\'et\'e universelle de la vari\'et\'e d'Albanese, $\alpha_X$ est donc d\'efini au moins sur 
$X\setminus Z_0$, donc $X\setminus Z_0\subset V$. Finalement on a $Z_0=X\setminus V$ (\ie $W_0=V$), donc l'ouvert de libert\'e locale de $\cL$ est exactement $V\times\Picard(X)$.

\par{\it Deuxi\`eme \'etape. }
Puisque $\cL$ est r\'eflexif, on peut recouvrir $X \times \Picard(X)$ par une famille d'ouverts $W_1,\ldots,W_n$ au-dessus desquels la restriction de $\cL$ s'ins\`ere dans une suite exacte de faisceaux coh\'erents :
\[
0\to\cL_{|W_i}\to\cF_{W_i}\to\cG_{W_i}\to 0,
\]
o\`u $\cF_{W_i}$ est localement libre sur $W_i$ et $\cG_{W_i}$ sans torsion (d'apr\`es la proposition~\ref{prop:Exact}). 
Puisque $\cG_{W_i}$ est sans torsion, quitte \`a choisir $W_i$ plus petit on peut supposer que ce faisceau s'ins\`ere dans une suite exacte de faisceaux coh\'erents:
\[
0\to\cG_{W_i}\to\cH_{W_i}\to\cN_{W_i}\to 0,
\]
o\`u $\cH_{W_i}$ est localement libre.
Il existe un ouvert non vide $\Omega$ 
de $\Picard(X)$ au-dessus duquel chacun des faisceaux $\cN_{W_i}$ est plat.
Pour tout diviseur de Weil $D$ sur $X$ alg\'ebriquement \'equivalent \`a z\'ero et tel que $[D]\in\Omega$, on obtient alors 
 sur chaque fibre $(X \times [D]) \cap W_i$ des suites exactes :
\[
0 \to \cL_{|(X\times [D]) \cap W_i} \to (\cF_{W_i})_{|(X\times [D]) \cap W_i} \to (\cG_{W_i})_{|(X\times [D]) \cap W_i} \to 0,
\]
o\`u $(\cF_{W_i})_{|(X\times[D])\cap W_i}$ est localement libre et $(\cG_{W_i})_{|(X\times[D])\cap W_i}$ sans torsion. Le faisceau $\cL_{|X\times[D]}$ est donc 
r\'eflexif sur $X$. Par ailleurs, les isomorphismes 
$\cL_{|U\times[D]}\cong\cE_{|U\times[D]}\cong\cO_{U}(D_{|U})$ sur $U \times [D]$ entra\^inent, avec la proposition~\ref{prop:Refl}, que 
$\cL_{|X\times [D]}$ est isomorphe \`a $\cO_X(D)$. La restriction du faisceau $\cL$ \`a la fibre g\'en\'erique est l'extension r\'eflexive du faisceau inversible 
correspondant. Puisque $\cL$ est localement libre exactement sur $V\times\Picard(X)$, l'ouvert d'inversibilit\'e du faisceau r\'eflexif $\cO_X(D)$ est exactement $V$ pour 
tout $[D]\in\Omega$ d'apr\`es le lemme~\ref{lem:ramero}. Puisque $\Omega$ engendre $\Picard(X)$, 
on en d\'eduit que tout diviseur de Weil sur $X$ alg\'ebriquement \'equivalent \`a z\'ero est localement libre sur $V$. 
De plus, pour tout point $x\in X\setminus V$ il existe $[D]\in\Picard(X)$ tel que $\cO_X(D)$ n'est pas localement libre en $x$.

\par{\it Troisi\`eme \'etape. }
Soient $D_1,\ldots,D_r$ des g\'en\'erateurs du groupe de N\'eron--Severi de $X$, et $V_1,\ldots,V_r$ les ouverts o\`u ces diviseurs de Weil sont localement principaux. 
Tout diviseur de Weil $D$ sur $X$ est donc localement principal sur $V\cap V_1\cap\ldots\cap V_r$, et, pour tout point ext\'erieur, il existe 
un diviseur de Weil sur $X$ qui n'y est pas localement principal. De mani\`ere \'equivalente, cet ouvert est le lieu o\`u l'extension r\'eflexive de tout fibr\'e en droites 
sur $U$ est localement libre: d'apr\`es le lemme~\ref{lem:extension}, c'est le lieu de factorialit\'e locale $X^\fact$ de $X$.
\end{proof}

\subsection{}\label{sub:finesse}

Dans le cas o\`u la vari\'et\'e normale $X$ est projective sur $\ik$, la d\'emonstration pr\'ec\'edente montre en particulier que le lieu de factorialit\'e locale $X^\text{fact}$
 est contenu dans le lieu de d\'efinition de l'application d'Albanese $V_X$ (la troisi\`eme \'etape donne d'ailleurs une description tr\`es pr\'ecise de $X^\text{fact}$). 
Si $X$ est localement factorielle, l'application d'Albanese $\alpha_X \colon X \dashrightarrow \Alb(X)$ est donc d\'efinie sur $X$ toute enti\`ere. 

On en d\'eduit le r\'esultat suivant pour une vari\'et\'e non n\'ecessairement projective (cf~\ref{directproof}):

\begin{cor}\label{cor:defalb}
Soit $X$ une $\ik$-vari\'et\'e localement factorielle. Alors toute application rationnelle $X \dashrightarrow A$ de $X$ vers une vari\'et\'e ab\'elienne $A$ est d\'efinie sur $X$ toute enti\`ere.
\end{cor}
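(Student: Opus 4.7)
Le plan est de se ramener au cas projectif d\'ej\`a observ\'e au \S\ref{sub:finesse}. Soit $f \colon X \dashrightarrow A$ une application rationnelle vers une vari\'et\'e ab\'elienne. Comme la question est locale sur $X$, on peut supposer $X$ quasi-projective. Puisque $X$ est normale (toute vari\'et\'e localement factorielle l'\'etant), elle s'identifie \`a un ouvert de la normalis\'ee $\overline{X}$ de son adh\'erence dans un espace projectif, suivant le proc\'ed\'e d\'ej\`a employ\'e au d\'ebut de la preuve du th\'eor\`eme~\ref{theo:gros1} ; on dispose ainsi d'une compactification projective normale de $X$ dans laquelle $X$ s'identifie \`a un ouvert.

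L'application $f$ se prolonge en une application rationnelle $\overline{f} \colon \overline{X} \dashrightarrow A$. Fixons un $\ik$-point lisse $a$ de $X$ : c'est aussi un point lisse de $\overline{X}$, utilisable comme point de base pour la construction de la vari\'et\'e d'Albanese $\Alb(\overline{X})$. Quitte \`a composer $\overline{f}$ avec une translation de $A$, on peut supposer qu'elle envoie $a$ sur $0_A$. La propri\'et\'e universelle de $\Alb(\overline{X})$ fournit alors un morphisme $g \colon \Alb(\overline{X}) \to A$ tel que $\overline{f} = g \circ \alpha_{\overline{X}}$ partout o\`u $\alpha_{\overline{X}}$ est d\'efinie ; en particulier, $\overline{f}$ est d\'efinie sur tout le lieu de d\'efinition $V_{\overline{X}}$ de $\alpha_{\overline{X}}$.

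Je conclurai alors gr\^ace \`a l'observation de \S\ref{sub:finesse}, selon laquelle $\overline{X}^{\fact} \subset V_{\overline{X}}$. Comme $X$ est localement factorielle et s'identifie \`a un ouvert de $\overline{X}$, tout point de $X$ appartient \`a $\overline{X}^{\fact}$, donc \`a $V_{\overline{X}}$. Par cons\'equent, $\overline{f}$ --- et donc $f$ --- est d\'efinie en tout point de $X$.

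L'obstacle principal me semble r\'esider dans la justification de l'\'etape de compactification et dans le glissement entre $X$ et $\overline{X}$ : il importe de v\'erifier que c'est bien la factorialit\'e locale de $X$ seule (et non celle de $\overline{X}$) qui suffit \`a garantir la d\'efinition de $\alpha_{\overline{X}}$ sur $X$. Cela tient au fait que \S\ref{sub:finesse} porte sur le lieu de factorialit\'e locale du compactifi\'e normal, lequel contient bien $X$ puisque $X$ y est ouvert et localement factoriel par hypoth\`ese. Le reste de la preuve n'est qu'une application directe de la propri\'et\'e universelle de la vari\'et\'e d'Albanese.
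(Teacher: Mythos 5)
Votre d\'emonstration est correcte et suit exactement la voie que l'article laisse implicite (\og on en d\'eduit\fg) : localisation, compactification projective normale comme au d\'ebut de la preuve du th\'eor\`eme~\ref{theo:gros1}, puis combinaison de l'observation de \ref{sub:finesse} ($\overline{X}^{\fact}\subset V_{\overline{X}}$) avec la propri\'et\'e universelle de la vari\'et\'e d'Albanese pour les applications rationnelles. Le seul point \`a expliciter --- et vous le traitez correctement --- est que la factorialit\'e locale de $X$, qui est un ouvert de $\overline{X}$, suffit \`a placer $X$ dans $\overline{X}^{\fact}$, donc dans le lieu de d\'efinition de $\alpha_{\overline{X}}$, puisque les anneaux locaux de $\overline{X}$ aux points de $X$ sont ceux de $X$.
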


\section{Lieu de factorialit\'e locale d'un produit}\label{factprod}

\begin{theorem}\label{theo:produit}
Soient $X$ et $Y$ deux vari\'et\'es normales. Le lieu de factorialit\'e locale de $X\times Y$ est le produit des lieux de factorialit\'e locale de $X$ et de $Y$.
\end{theorem}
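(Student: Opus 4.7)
The plan is to establish the two inclusions separately, and as in the proof of Theorem~\ref{theo:gros1} (local factoriality being a local property) we may at the outset pass to projective closures and their normalisations to assume $X$ and $Y$ projective.

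For the inclusion $(X \times_\ik Y)^\fact \subseteq X^\fact \times Y^\fact$, I would use faithfully flat descent. Given $(x,y) \in (X \times_\ik Y)^\fact$ and any reflexive rank~$1$ sheaf $\cF$ near $x$ on $X$, the pullback $p^*\cF$ by the flat projection $p \colon X \times_\ik Y \to X$ remains reflexive of rank~$1$, and is invertible at $(x,y)$ by assumption. Faithful flatness of $\cO_{X,x} \to \cO_{X \times_\ik Y, (x,y)}$ together with finite generation of $\cF_x$ then forces $\cF_x$ itself to be free of rank~$1$, so Lemma~\ref{lem:extension} gives $x \in X^\fact$; symmetrically $y \in Y^\fact$.

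For the reverse inclusion, I would fix $(x,y) \in X^\fact \times Y^\fact$ and apply the three-step argument from the proof of Theorem~\ref{theo:gros1} to the projective normal variety $X \times_\ik Y$. Since local factoriality at a point forces every algebraically trivial Weil divisor to be Cartier there, one has $X^\fact \subseteq V_X$ and symmetrically $Y^\fact \subseteq V_Y$, so $\alpha_X \times \alpha_Y$ is defined at $(x,y)$, which places $(x,y)$ in $V_{X \times Y}$; the second step of the proof of Theorem~\ref{theo:gros1} then shows every algebraically trivial Weil divisor on $X \times_\ik Y$ is Cartier at $(x,y)$. To handle the N\'eron--Severi generators, I would invoke a Weil-type decomposition: the smooth open $U_X \times U_Y$ has codimension~$\geq 2$ complement in $X \times_\ik Y$, so $\Cl(X \times_\ik Y) \cong \Pic(U_X \times U_Y)$, and this group decomposes as $\Pic(U_X) \oplus \Pic(U_Y) \oplus \Hom(\Alb(U_X), \Picard(U_Y))$. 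Pullbacks from $X$ and $Y$ are Cartier at $(x,y)$ by local factoriality and flatness of the projections; the \og mixed\fg~summand is Cartier at $(x,y)$ because each $\phi \in \Hom(\Alb(X), \Picard(Y))$ defines a line bundle on $X \times_\ik Y$ by pulling back the Poincar\'e bundle on $\Picard(Y) \times Y$ along $(\phi \circ \alpha_X) \times \id_Y$, a morphism well-defined at $(x,y)$ by the previous step. The main obstacle will be rigorously establishing this Weil decomposition of $\Pic(U_X \times U_Y)$ in the present generality, since $U_X$ and $U_Y$ are only smooth quasi-projective; this should follow by identifying the Picard and Albanese varieties of the projective $X$ (resp.~$Y$) with those of $U_X$ (resp.~$U_Y$), an identification afforded by local factoriality together with the Picard scheme machinery set up in Section~\ref{albanese}.
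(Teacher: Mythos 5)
Your proposal is correct and follows essentially the same route as the paper: reduce to the projective case, split a line bundle on $U_X\times U_Y$ as $p_X^*L_1\otimes p_Y^*L_2\otimes M$ with $M$ rigidified along $X\times\{b\}$ and $\{a\}\times Y$, handle $L_1,L_2$ by local factoriality of the factors, and realise $M$ as the pullback of a Poincar\'e bundle via a morphism $\Alb(X)\to\Picard(Y)$, so that its reflexive extension is invertible on $V_X\times V_Y\supseteq X^\fact\times Y^\fact$. The decomposition you flag as the main obstacle is resolved exactly as you anticipate, via Proposition~\ref{picardWeil}: the rigidified $M$ defines a pointed morphism $U_X\to\Picard(Y)$ which factors through $\Alb(X)$ by the universal property of the Albanese variety, and your descent argument for the easy inclusion is what the paper dismisses as immediate.
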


\begin{proof}
La question \'etant locale, on peut, comme dans la d\'emonstration du th\'eor\`eme~\ref{theo:gros1}, supposer que $X$ et $Y$ sont projectives sur $\ik$.

Fixons deux $k$-points lisses $a\in X$ et $b \in Y$. Nous notons $U_X$ et $U_Y$ les ouverts de lissit\'e de $X$ et $Y$, et $V_X$ et $V_Y$ les ouverts de d\'efinition de leurs
 applications d'Albanese 
$\alpha_X$ et $\alpha_Y$. Rappelons que $X\times_\ik Y$ est irr\'eductible et
normale (d'apr\`es~\cite[6.14.3]{egaIV2}), $U_{X\times Y}=U_X\times U_Y$, et $\Alb(X\times Y)\cong\Alb(X)\times\Alb(Y)$ de telle sorte que $V_{X\times Y}=V_X\times V_Y$.

Soit $L\in\Pic(U_{X\times Y})$, que l'on d\'ecompose sous la forme:
\[
L\cong p_X^*L_1\otimes p_Y^*L_2\otimes M,
\]
o\`u $p_X,p_Y$ sont les projections respectives de $U_X\times U_Y$ sur $U_X$ et $U_Y$, $L_1\in\Pic(U_X)$, $L_2\in\Pic(U_Y)$ et $M\in\Pic(U_{X\times Y})$ est tel que les restrictions $M_{|\{a\}\times Y}$ et $M_{|X\times\{b\}}$ sont triviales. Au diviseur de Weil associ\'e \`a l'extension r\'eflexive de $M$ \`a $U_X\times Y$ correspond un morphisme point\'e naturel $f_M\colon U_X\to\Picard(Y)$, donc un morphisme 
$\phi_M\colon\Alb(X)\to\Picard(Y)$ d'apr\`es la proposition~\ref{picardWeil}. Par la propri\'et\'e universelle de $\Picard(Y)$, ce morphisme correspond \`a un fibr\'e en droites $N$ sur $\Alb(X)\times\Alb(Y)$. Tous ces morphismes apparaissent dans le diagramme commutatif:
\begin{equation*}
\xymatrix@C=35pt{
U_X \times U_Y \ar[r]^-{f_M\times\id} \ar[rd]_-{{\alpha_X}_{|U_X}\times\id} & \Picard(Y) \times U_Y \ar[r]^-{\id\times{\alpha_Y}_{|U_Y}} & \Picard(Y) \times \Alb(Y) \\
& \Alb(X) \times U_Y \ar[r]^-{\id\times{\alpha_Y}_{|U_Y}} \ar[u]_-{\phi_M\times\id} & \Alb(X) \times \Alb(Y) \ar[u]_-{\phi_M\times\id}
}
\end{equation*}
de telle sorte que:
\begin{align*}
M&=(f_M\times\id)^*(\id\times{\alpha_Y}_{|U_Y})^*\cP_{\Alb(Y)},\\
N&=(\phi_M\times\id)^*\cP_{\Alb(Y)},
\end{align*}
o\`u $\cP_{\Alb(Y)}$ d\'esigne le fibr\'e de Poincar\'e sur $\Alb(Y)\times\Picard(Y)$ (apr\`es permutation des facteurs). On r\'ealise ainsi $M$ comme image r\'eciproque du fibr\'e inversible $N$ sur $\Alb(X)\times\Alb(Y)$, ce qui implique que l'extension r\'eflexive de $M$ \`a $X\times Y$ est localement libre sur $V_X\times V_Y$.

Notons par ailleurs $W_1$ et $W_2$ les ouverts respectifs de $X$ et $Y$ sur lesquels les extensions r\'eflexives de $L_1$ et $L_2$ sont localement libres. 
L'extension r\'eflexive du fibr\'e $L$ est donc localement libre sur l'intersection de $V_X\times V_Y$ avec $W_1\times W_2$. Mais, d'apr\`es~\ref{sub:finesse}, cette 
intersection contient $X^{\text{fact}}\times Y^{\text{fact}}$. D'apr\`es le lemme~\ref{lem:extension}, cela signifie que $(X\times Y)^{\text{fact}}$ contient $X^{\text{fact}}\times Y^{\text{fact}}$.
La r\'eciproque est imm\'ediate.
\end{proof}

\begin{cor}\label{cor:produit}
Le produit de vari\'et\'es localement factorielles est une vari\'et\'e localement factorielle.
\end{cor}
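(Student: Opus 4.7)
L'\'enonc\'e est une cons\'equence imm\'ediate du th\'eor\`eme \ref{theo:produit}. Le plan consiste \`a d\'erouler la d\'efinition du lieu de factorialit\'e locale et \`a appliquer l'\'egalit\'e ensembliste obtenue au paragraphe pr\'ec\'edent.

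Plus pr\'ecis\'ement, soient $X$ et $Y$ deux vari\'et\'es localement factorielles. Par d\'efinition, ceci signifie que tous les anneaux locaux $\cO_{X,x}$ et $\cO_{Y,y}$ sont factoriels, autrement dit que $X^\fact = X$ et $Y^\fact = Y$. D'apr\`es le th\'eor\`eme \ref{theo:produit}, on a
\[
(X \times_\ik Y)^\fact = X^\fact \times Y^\fact = X \times_\ik Y,
\]
ce qui signifie exactement que tous les anneaux locaux de $X \times_\ik Y$ sont factoriels, c'est-\`a-dire que $X \times_\ik Y$ est une vari\'et\'e localement factorielle.

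Il reste \`a observer que $X \times_\ik Y$ est bien une vari\'et\'e au sens de l'introduction (c'est-\`a-dire un sch\'ema noeth\'erien int\`egre, s\'epar\'e et de type fini sur $\ik$) : la s\'eparation et la finitude sont automatiques, la noeth\'erianit\'e r\'esulte de celle des deux facteurs, et l'int\'egrit\'e provient de la normalit\'e de $X \times_\ik Y$ (rappel\'ee dans la d\'emonstration du th\'eor\`eme \ref{theo:produit} via \cite[6.14.5]{ega4}) jointe \`a la connexit\'e, elle-m\^eme cons\'equence de l'hypoth\`ese que $\ik$ est alg\'ebriquement clos et du fait que $X$ et $Y$ sont g\'eom\'etriquement int\`egres sur $\ik$. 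Aucune difficult\'e substantielle n'est donc \`a pr\'evoir : tout le travail r\'eside dans le th\'eor\`eme \ref{theo:produit}, lui-m\^eme appuy\'e sur le th\'eor\`eme \ref{theo:gros1} et sur la description pr\'ecise de $X^\fact$ via la vari\'et\'e d'Albanese donn\'ee en \ref{sub:finesse}.
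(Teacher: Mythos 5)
Votre démonstration est correcte et suit exactement la voie (implicite) du texte : le corollaire est une conséquence immédiate du théorème \ref{theo:produit} appliqué à $X^\fact = X$ et $Y^\fact = Y$. La seule précision à ajouter est que l'hypothèse de normalité requise par le théorème \ref{theo:produit} est bien satisfaite, puisqu'une variété localement factorielle est normale (un anneau factoriel étant intégralement clos).
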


\begin{rem}\label{rem:produit}
Si $X$ et $Y$ sont des vari\'et\'es normales et projectives sur $\ik$, l'argument utilis\'e dans la d\'emonstration de la proposition~\ref{theo:produit} montre 
 que $\Cl(X\times Y)\cong \Cl(X)\times\Cl(Y)\times\Hom(\Alb(X),\Picard(Y))$. On peut en d\'eduire le r\'esultat suivant, dont nous 
ne connaissons pas de d\'emonstration directe: 

\begin{cor} 
Soient $A$ et $B$ deux alg\`ebres de type fini sur 
un corps alg\'ebriquement clos $\ik$. Si $A$ et $B$ sont factorielles, alors leur produit tensoriel $A \otimes_\ik B$ est encore factoriel.
\end{cor}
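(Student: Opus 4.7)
L'id\'ee est de d\'eduire l'\'enonc\'e du corollaire~\ref{cor:produit}, puis de contr\^oler l'annulation du groupe des classes du produit gr\^ace \`a la remarque~\ref{rem:produit}.

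On commencerait par poser $X=\Spec A$ et $Y=\Spec B$, deux vari\'et\'es affines localement factorielles (la factorialit\'e passant aux localis\'es). Le produit $X\times_\ik Y=\Spec(A\otimes_\ik B)$ est int\`egre (puisque $\ik$ est alg\'ebriquement clos) et normal, et le corollaire~\ref{cor:produit} le rend localement factoriel. Un anneau noeth\'erien normal \'etant factoriel si et seulement si son groupe des classes est nul, il restera \`a \'etablir $\Cl(X\times Y)=0$.

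Pour cela, on choisirait des compactifications projectives normales $\bar X\supset X$ et $\bar Y\supset Y$ (par Nagata, puis normalisation). L'\'etape-cl\'e est de v\'erifier que $\Picard(\bar X)=\Picard(\bar Y)=0$. Comme le morphisme de restriction $\Cl(\bar X)\twoheadrightarrow\Cl(X)=0$ a pour noyau le groupe engendr\'e par les composantes irr\'eductibles (en nombre fini) de $\bar X\setminus X$, le groupe $\Cl(\bar X)$ est de type fini, donc son sous-groupe $\Picard(\bar X)(\ik)$ aussi. Mais $\Picard(\bar X)(\ik)$, groupe des $\ik$-points d'une vari\'et\'e ab\'elienne sur un corps alg\'ebriquement clos, est divisible ; un sous-groupe divisible d'un groupe ab\'elien de type fini \'etant nul, on conclut $\Picard(\bar X)=0$, d'o\`u $\Alb(\bar X)=0$ par dualit\'e, et sym\'etriquement pour $\bar Y$.

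La remarque~\ref{rem:produit} fournira alors la d\'ecomposition
$$
\Cl(\bar X\times\bar Y)\cong\Cl(\bar X)\oplus\Cl(\bar Y)\oplus\Hom(\Alb(\bar X),\Picard(\bar Y)),
$$
dont le dernier facteur vient de s'annuler. Par la restriction surjective $\Cl(\bar X\times\bar Y)\twoheadrightarrow\Cl(X\times Y)$, une classe $p_X^*D$ s'envoie sur $p_X^*(D_{|X})$, qui est principale puisque $D_{|X}$ l'est dans $\Cl(X)=0$ ; de m\^eme pour le facteur $\bar Y$. Ainsi $\Cl(X\times Y)=0$, ce qui ach\`evera la d\'emonstration. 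L'obstacle principal sera cette annulation de $\Picard(\bar X)$, qui exploite de fa\c con essentielle l'hypoth\`ese \guillemotleft~$\ik$ alg\'ebriquement clos~\guillemotright\ (pour la divisibilit\'e des vari\'et\'es ab\'eliennes) ; le reste est formel une fois dispos\'ee la remarque~\ref{rem:produit}.
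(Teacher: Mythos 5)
Votre démonstration est correcte et suit essentiellement la même voie que celle du texte : compactification projective normale, finitude de $\Cl(\bar X)$ via les diviseurs du bord, trivialité de $\Alb$ et $\Picard$, décomposition de la remarque~\ref{rem:produit}, puis surjectivité de la restriction $\Cl(\bar X\times\bar Y)\to\Cl(X\times Y)$. Vous explicitez seulement l'argument de divisibilité justifiant la trivialité de $\Picard(\bar X)$, que le texte laisse implicite, et l'appel au corollaire~\ref{cor:produit} est superflu (un anneau de Krull de groupe des classes nul est factoriel).
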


\begin{proof}
 On peut \`a nouveau plonger 
$\Spec A$ et $\Spec B$ dans des vari\'et\'es projectives normales $X$ et $Y$. Puisque $\Cl(A)=0$, on a que
$\Cl(X)$ est de type fini: il est engendr\'e par les diviseurs de Weil $D_1,\ldots,D_r$ contenus dans le bord $X\setminus \Spec A$. 
On en d\'eduit que la vari\'et\'e d'Albanese $\Alb(X)$ est triviale. 
De m\^eme, $\Cl(Y)$ est de type fini, engendr\'e par des diviseurs $E_1,\ldots,E_s$, et $\Alb(Y)$ est triviale. On obtient ainsi que $\Cl(X \times Y)$ est exactement la somme
 $\Cl(X) \oplus \Cl(Y)$. Mais 
$\Cl(\Spec (A \otimes_\ik B))$ est un quotient de:
\[
\Cl(X \times Y) / \langle D_1 \times Y,\ldots, D_r \times Y, X \times E_1,\ldots,X \times E_s \rangle.
\]
Puisque $\Cl(X) /\langle D_1,\ldots,D_r\rangle$ et $\Cl(Y) / \langle E_1,\ldots,E_s \rangle$ sont triviaux, il en est de m\^eme de 
$\Cl(\Spec(A \otimes_\ik B))$, et $A \otimes_\ik B$ est donc factoriel.
\end{proof}
\end{rem}

\begin{rem}
 Le r\'esultat est faux si l'on ne suppose plus $\ik$ alg\'ebriquement clos. En effet, on v\'erifie facilement par descente galoisienne que le quotient 
$\IA^2_\IQ/(\IZ/3\IZ)$ du plan affine par un automorphisme lin\'eaire d'ordre $3$ d\'efinit une 
vari\'et\'e $X$  sur le corps $\IQ$ des rationnels qui est localement factorielle, tandis que la vari\'et\'e $X_{\bar{\IQ}}$ qui s'en d\'eduit par extension des scalaires \`a 
$\bar{\IQ}$ ne l'est plus.
\end{rem}

\section{G\'en\'eralisation \`a la $\IQ$-factorialit\'e}\label{Qfact}

On \'etend dans cette section les r\'esultats pr\'ec\'edents aux vari\'et\'es localement $\IQ$-factorielles. Cela n\'ecessite une \'etude plus fine du comportement de la formation 
de l'extension r\'eflexive de $\cE$ \`a $X\times\Picard(X)$ vis \`a vis des 
changements de base $T \lra \Picard(X)$.
\begin{theorem}\label{theo:gros2}
Soit $X$ une vari\'et\'e d\'efinie sur un corps alg\'ebriquement clos. Alors le lieu de $\IQ$-factorialit\'e 
de $X$ est ouvert. 
\end{theorem}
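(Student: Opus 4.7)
The plan is to extend the argument of Theorem~\ref{theo:gros1} to $\IQ$-factoriality via a more refined analysis of the reflexive extension $\cL$ on $X\times\Picard(X)$ under base change, as announced in the opening of this section. As in Theorem~\ref{theo:gros1}, one first reduces to the case where $X$ is normal and projective over $\ik$: normality is an open condition, the question is local on $X$, and one can pass to the normalization of a projective closure.

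The key new ingredient is to consider, for each positive integer $n$, the reflexive rank-one sheaf $(\cL^{\otimes n})^{\vee\vee}$ on $X\times\Picard(X)$. Using bilinearity of the Poincaré bundle $\cP$ (via $\cP^{\otimes n}\cong(\id\times[n])^*\cP$, where $[n]\colon\Picard(X)\to\Picard(X)$ denotes multiplication by $n$), one identifies $(\cL^{\otimes n})^{\vee\vee}$ with the flat pullback $(\id_X\times[n])^*\cL$, and deduces that its locally free locus is again the product $V_X\times\Picard(X)$; moreover, on a dense open subset $\Omega_n\subseteq\Picard(X)$ the restriction to $X\times\{[D]\}$ coincides with $\cO_X(nD)$.

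The technical heart of the argument --- the ``étude plus fine'' signalled above --- is to control this restriction for arbitrary (not merely generic) $[D]\in\Picard(X)$, in a manner compatible with the locus where $\cO_X(nD)$ is locally free at a given $x\in X$. Granting such control, and combining with finite generation of $\NS(X)$, one would exhibit the $\IQ$-factoriality locus of $X$ as a finite intersection of open subsets: one arising from the $\Picard(X)$-contribution (trivial when $\ik$ is the algebraic closure of a finite field, since then $\Picard(X)(\ik)$ is torsion and every class is automatically $\IQ$-Cartier; a genuine open condition related to $V_X$ and to Remark~\ref{directproof} in the remaining case), together with one open condition per chosen generator of $\NS(X)$. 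The main obstacle is precisely this refined base-change analysis: controlling the non-generic fibers of $(\cL^{\otimes n})^{\vee\vee}$ uniformly in $n$, so as to extract from the product form of its locally free locus an effective openness statement.
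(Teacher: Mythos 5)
Your reduction to the normal projective case, your treatment of the N\'eron--Severi contribution via finitely many generators, and your observation that the case $\ik = \overline{\mathbf{F}_p}$ is degenerate (every class in $\Picard(X)(\ik)$ being torsion) all match the paper. But the argument is not complete: the step you yourself flag as ``granting such control'' --- controlling the restriction of $(\cL^{\otimes n})^{\vee\vee}$ to $X\times\{[D]\}$ for \emph{arbitrary} $[D]$, uniformly in $n$ --- is precisely the missing content, and the route you sketch for it does not obviously close. Knowing that the locally free locus of $(\cL^{\otimes n})^{\vee\vee}$ is $V_X\times\Picard(X)$ only gives information about $\cO_X(nD)$ for $[D]$ in a dense open $\Omega_n$ depending on $n$; for a fixed non-generic $[D]$ the fibre of $(\cL^{\otimes n})^{\vee\vee}$ need not be reflexive, hence need not be $\cO_X(nD)$. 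The intersection $\bigcap_n \Omega_n$ is a countable intersection of dense opens, which is dense over $\IC$ (this is exactly the short argument the paper gives in the remark following the theorem, valid only in that setting) but can fail to contain the point you need over a general algebraically closed field. So as written there is a genuine gap, not merely an omitted routine verification.

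The paper fills this gap by a different mechanism that avoids tensor powers entirely. First, by generic flatness and noetherian induction it produces a finite partition $\Picard(X)=\coprod P_i$ into smooth locally closed pieces over which the formation of $i_*\cE$ commutes with \emph{every} base change $T\to P_i$; hence for $[D]\in P_i$ the divisorial sheaf $\cO_X(D)$ really is the fibre of the reflexive extension $\cL_i$ over $X\times P_i$. Ramanujam--Samuel then shows the non-invertibility locus of $\cL_i$ is a product $ (X\setminus V_i)\times P_i$, so the set $\Sigma$ of pairs $(x,[D])$ with $\cO_X(D)$ invertible at $x$ is constructible, and its fibre $A_x$ over a closed point $x$ is a constructible subgroup, hence a \emph{closed} subgroup, of the abelian variety $\Picard(X)$. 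The question ``is every $[D]$ $\IQ$-Cartier at $x$'' becomes ``is $\Picard(X)/A_x$ a torsion group'', and by the Frey--Jarden theorem a nontrivial abelian variety over a field that is not $\overline{\mathbf{F}_p}$ always has a non-torsion point; thus the $\IQ$-Cartier condition at $x$ for algebraically trivial divisors is equivalent to $A_x=\Picard(X)$, i.e.\ to $x\in V$, which is open. This group-theoretic step (closed subgroup, quotient, non-torsion point) is the idea your proposal is missing; I would encourage you to rebuild the argument around the stratification of $\Picard(X)$ and the subgroups $A_x$ rather than around uniform control of the sheaves $(\cL^{\otimes n})^{\vee\vee}$.
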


\begin{proof} Il suffit \`a nouveau de d\'emontrer le r\'esultat lorsque $X$ est normale et projective sur $\ik$. Soient $a$ un point rationnel de l'ouvert de lissit\'e $U$ de $X$, 
$\Alb(X)$ la vari\'et\'e d'Albanese 
associ\'ee, $V$ l'ouvert de d\'efinition de l'application d'Albanese $X \dashrightarrow \Alb(X)$ et $\cE$ le fibr\'e sur $V \times \Picard(X)$ image inverse 
par $\alpha_X \times \id$ du fibr\'e de Poincar\'e sur $\Alb(X) \times \Picard(X)$.

Lorsque $\ik$ est la cl\^oture alg\'ebrique d'un corps fini, $\Alb(X)$ est de torsion. Cela signifie que tout diviseur de Weil alg\'ebriquement \'equivalent \`a z\'ero 
est $\IQ$-Cartier. Soient $D_1,\ldots,D_r$ des g\'en\'erateurs du groupe de N\'eron--Severi de $X$, et $V'_1,\ldots,V'_r$ les ouverts sur lesquels ces diviseurs sont 
localement $\IQ$-Cartier. 
Le lieu de $\IQ$-factorialit\'e est alors l'intersection $V'_1 \cap \ldots \cap V'_r$.

Supposons maintenant que $\ik$ n'est pas la cl\^oture alg\'ebrique d'un corps fini.

On construit tout d'abord une partition finie de $\Picard(X)$ en sous-sch\'emas localement ferm\'es lisses irr\'eductibles $\Picard(X)=\coprod P_i$ tels que, pour tout $i$, la formation de l'image directe par l'immersion 
ouverte $U \to X$ de la restriction 
$\cE_i$ de $\cE$ \`a $U \times P_i$ commute \`a tout changement de base $T \to P_i$.

On a vu, dans la deuxi\`eme \'etape de la d\'emonstration du th\'eor\`eme~\ref{theo:gros1}, que la r\'eflexivit\'e de l'extension 
r\'eflexive $\cL=(i \times \id)_\ast \cE$ assure l'existence d'un recouvrement ouvert $X \times \Picard(X)=\bigcup W_i$ 
tel que sur chaque $W_i$ la restriction $\cL_{|W_i}$ de $\cL$ \`a $W_i$ s'ins\`ere dans une suite exacte:
\[
0 \to \cL_{|W_i} \to \cF_{W_i} \to \cG_{W_i} \to 0,
\]
o\`u $\cF_{W_i}$ est localement libre et $\cG_{W_i}$ sans torsion, 
tandis que $\cG_{W_i}$ s'ins\`ere dans une suite exacte:
\[
0\to\cG_{W_i}\to\cH_{W_i}\to\cN_{W_i}\to 0,
\]
o\`u $\cH_{W_i}$ est localement libre, et que le th\'eor\`eme de platitude g\'en\'erique fournit un ouvert $\Omega$ de $\Picard(X)$ tel que la formation 
de l'image directe par $U \times \Omega \to X \times \Omega$ de la restriction de $\cE$ \`a $U \times \Omega$ commute \`a tout changement de base $T \to \Omega$. 

Soit alors $Y$ le sch\'ema r\'eduit associ\'e \`a une composante irr\'eductible du sous-sch\'ema ferm\'e $\Picard(X) \setminus \Omega$.
On obtient de la m\^eme mani\`ere, en consid\'erant la restriction du faisceau localement libre $\cE$ au produit de $U$ et de l'ouvert de lissit\'e 
de $Y$, un ouvert lisse de $Y$ ayant la propri\'et\'e attendue. On en d\'eduit par r\'ecurrence 
noeth\'erienne la partition $\Picard(X)=\coprod P_i$ annonc\'ee.

En particulier, si $\cL_i$ d\'esigne l'extension r\'eflexive de $\cE_i$ \`a $X \times P_i$, le faisceau divisoriel $\cO_X(D)$ associ\'e \`a un point $[D]$ de $\Picard(X)$ 
appartenant \`a
 l'image de $P_i$ s'obtient comme la restriction ${\cL_i}_{|X \times {[D]}}$ du faisceau $\cL_i$ \`a $X \times [D]$.

Consid\'erons le sous-ensemble:
\[
\Sigma=\{(x,[D]) \in X(\ik) \times \Picard(X)(\ik)| \cO_X(D) \ \text{est inversible au point } x\}
\]
\noindent de l'ensemble  des $\ik$-points du produit $X \times \Picard(X)$. 
C'est une partie constructible, puisque sa trace sur chaque partie $X \times P_i$ est ouverte: en effet, le faisceau r\'eflexif $\cL_i$ est, d'apr\`es le th\'eor\`eme de 
Ramanujam-Samuel (voir la deuxi\`eme \'etape de la d\'emonstration de~\ref{theo:gros1}), inversible sur un ouvert de la 
forme $V_i \times P_i$, o\`u $V_i$ est un ouvert de $X$, et le lemme~\ref{lem:ramero} montre alors que $\Sigma \cap (X \times P_i)$ est exactement $V_i \times P_i$.

Consid\'erons maintenant la projection de $\Sigma$ sur $X$. Sa fibre $A_x$ au-dessus d'un point ferm\'e $x \in X$ forme un sous-groupe constructible de 
l'ensemble des $\ik$-points 
de la vari\'et\'e ab\'elienne $\Picard(X)$. C'est donc un sous-groupe ferm\'e de $\Picard(X)$.

L'ouvert de d\'efinition $V$ de l'application d'Albanese $X \dashrightarrow \Alb(X)$ est le lieu des points auxquels tout diviseur de Weil alg\'ebriquement 
\'equivalent \`a z\'ero est inversible. C'est donc exactement le lieu des points $x$ de $X$ tels que $A_x=\Picard(X)$. 

Nous montrons que c'est aussi le lieu o\`u tout diviseur 
de Weil alg\'ebriquement \'equivalent \`a z\'ero est $\IQ$-Cartier. Soit en effet $x$ un point ferm\'e de $X \setminus V$. La vari\'et\'e ab\'elienne $\Picard(X)/A_x$ n'est alors pas 
triviale, et contient donc un $\ik$-point qui n'est pas de torsion (d'apr\`es~\cite[Theorem 10.1]{freyjarden}). Un rel\`evement de ce point \`a $\Picard(X)$ d\'efinit un diviseur 
de Weil alg\'ebriquement \'equivalent \`a z\'ero qui n'est pas $\IQ$-Cartier en $x$.

Soient des g\'en\'erateurs $D_1,\ldots,D_r$ du groupe de N\'eron--Severi de $X$, et $V'_1,\ldots,V'_r$ les ouverts sur lesquels ces diviseurs sont localement $\IQ$-Cartier. 
Le lieu de $\IQ$-factorialit\'e est alors l'intersection $V \cap V'_1 \cap \ldots \cap V'_r$.
\end{proof}

\begin{rem}
On peut donner une d\'emonstration beaucoup plus courte de ce r\'esultat lorsque $X$ est une vari\'et\'e projective normale complexe. Reprenons les notations de 
la d\'emonstration du th\'eor\`eme~\ref{theo:gros1}. L'intersection $\bigcap_{m \geqslant 1} m^{-1} \Omega$ dans $\Picard(X)$ des images 
inverses de l'ouvert $\Omega$ par la multiplication $m \colon \Picard(X) \to \Picard(X)$ est dense, comme intersection d\'enombrable d'ouverts denses.
Il existe donc un diviseur de Weil $D$ dont tout multiple $mD$ est inversible exactement sur $V$. On peut alors conclure comme pr\'ec\'edemment.
\end{rem}

On obtient une g\'en\'eralisation du corollaire~\ref{cor:defalb} (voir remarque~\ref{directproof}):

\begin{cor}
 Soit $X$ une vari\'et\'e localement $\IQ$-factorielle d\'efinie sur un corps alg\'ebriquement clos qui n'est pas la cl\^oture alg\'ebrique d'un corps fini. Alors toute application rationnelle $X \dashrightarrow A$ de $X$ vers une vari\'et\'e ab\'elienne est d\'efinie sur $X$ 
toute enti\`ere.
\end{cor}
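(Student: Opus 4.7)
Le plan est de reprendre la strat\'egie utilis\'ee pour le corollaire \ref{cor:defalb}, en rempla\c{c}ant l'ingr\'edient apport\'e par le th\'eor\`eme \ref{theo:gros1} par celui du th\'eor\`eme \ref{theo:gros2} : se ramener au cas projectif, factoriser l'application rationnelle \`a travers une vari\'et\'e d'Albanese, puis invoquer le fait crucial \'etabli au cours de la d\'emonstration du th\'eor\`eme \ref{theo:gros2}.

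D'abord, fixons un point $x \in X$ en lequel on souhaite montrer que l'application rationnelle $f \colon X \dashrightarrow A$ est d\'efinie. La question \'etant locale en $x$, on peut remplacer $X$ par un voisinage ouvert affine de $x$, le plonger dans un espace projectif, prendre son adh\'erence, puis sa normalisation : ceci r\'ealise $X$ comme ouvert d'une vari\'et\'e projective normale $\overline{X}^\nu$. L'application rationnelle $f$ s'\'etend alors tautologiquement en une application rationnelle $\bar f \colon \overline{X}^\nu \dashrightarrow A$. Puisque l'anneau local $\cO_{\overline{X}^\nu, x} = \cO_{X,x}$ est inchang\'e, $\overline{X}^\nu$ reste localement $\IQ$-factorielle en $x$, et il suffit de prouver que $\bar f$ est d\'efinie en $x$.

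Ensuite, je choisirais un $\ik$-point $a$ dans le lieu lisse de $X \subset \overline{X}^\nu$ en lequel $f$ est r\'eguli\`ere, et translaterais $A$ de sorte que $\bar f(a) = 0$. La propri\'et\'e universelle de la vari\'et\'e d'Albanese $\Alb(\overline{X}^\nu)$ fournit alors une factorisation
$$
\bar f \colon \overline{X}^\nu \dashrightarrow \Alb(\overline{X}^\nu) \to A
$$
dont la seconde fl\`eche est un morphisme d\'efini partout. Il suffit donc de montrer que l'application d'Albanese $\alpha_{\overline{X}^\nu}$ est d\'efinie au point $x$.

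Enfin, c'est pr\'ecis\'ement ce que l'on obtient en invoquant la d\'emonstration du th\'eor\`eme \ref{theo:gros2} : celle-ci identifie, lorsque $\ik$ n'est pas la cl\^oture alg\'ebrique d'un corps fini, le lieu de d\'efinition $V$ de $\alpha_{\overline{X}^\nu}$ avec le lieu o\`u tout diviseur de Weil alg\'ebriquement \'equivalent \`a z\'ero est $\IQ$-Cartier (argument reposant sur \cite[Theorem 10.1]{freyjarden} appliqu\'e au quotient $\Picard(\overline{X}^\nu)/A_x$). Puisque $\overline{X}^\nu$ est localement $\IQ$-factorielle en $x$, tout diviseur de Weil y est $\IQ$-Cartier, donc $x \in V$ et $\alpha_{\overline{X}^\nu}$ est d\'efinie en $x$. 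Le c\oe ur de l'argument est ainsi enti\`erement port\'e par la d\'emonstration du th\'eor\`eme \ref{theo:gros2} ; le seul point d\'elicat est la v\'erification que la normalisation projective $\overline{X}^\nu$ pr\'eserve bien la $\IQ$-factorialit\'e locale au point consid\'er\'e, ce qui est imm\'ediat puisque $X$ est un ouvert de $\overline{X}^\nu$.
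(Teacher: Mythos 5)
Votre d\'emonstration est correcte et suit essentiellement l'approche (implicite) du texte : r\'eduction au cas projectif normal par compactification et normalisation, factorisation de l'application rationnelle par la vari\'et\'e d'Albanese, puis utilisation du fait, \'etabli dans la d\'emonstration du th\'eor\`eme \ref{theo:gros2} (et dans la remarque \ref{directproof}), que le lieu de d\'efinition de l'application d'Albanese co\"incide avec le lieu o\`u tout diviseur de Weil alg\'ebriquement \'equivalent \`a z\'ero est $\IQ$-Cartier lorsque $\ik$ n'est pas la cl\^oture alg\'ebrique d'un corps fini.
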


\begin{rem}
La conclusion du corollaire pr\'ec\'edent est \'egalement vraie pour toute vari\'et\'e complexe $X$ projective normale \`a singularit\'es rationnelles 
(voir~\cite[Lemma 8.1]{kawamata}; il suffit que $X$ admette une r\'esolution $f \colon Y \to X$ v\'erifiant $R^1f_\ast \cO_Y = 0$). Remarquons qu'il existe des singularit\'es 
localement $\IQ$-factorielles mais non rationnelles, comme l'origine du c\^one affine au-dessus d'une hypersurface lisse dans $\IP^4$ de degr\'e au moins $5$.
\end{rem}

\begin{theorem}
Soient $X$ et $Y$ deux vari\'et\'es alg\'ebriques d\'efinies sur un corps alg\'ebriquement clos $\ik$ qui n'est pas la cl\^oture alg\'ebrique d'un corps fini. Alors le lieu de 
$\IQ$-factorialit\'e du produit direct $X \times Y$ est le produit des lieux de $\IQ$-factorialit\'e de $X$ et $Y$. 
\end{theorem}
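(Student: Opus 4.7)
La stratégie consistera à adapter la démonstration du théorème \ref{theo:produit}, le point clé étant de remplacer l'inclusion $X^\fact\subseteq V_X$ utilisée en \ref{sub:finesse} par son analogue $X^{\IQ\text{-fact}}\subseteq V_X$. Cette dernière résultera du corollaire qui suit le théorème \ref{theo:gros2}, appliqué à l'ouvert localement $\IQ$-factoriel $X^{\IQ\text{-fact}}$ ; c'est uniquement à cet endroit que l'hypothèse sur le corps $\ik$ interviendra.

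Comme dans la démonstration du théorème \ref{theo:produit}, la question étant locale on commencera par se ramener au cas où $X$ et $Y$ sont des variétés normales projectives sur $\ik$. L'inclusion $(X\times Y)^{\IQ\text{-fact}}\subseteq X^{\IQ\text{-fact}}\times Y^{\IQ\text{-fact}}$ suivra d'un argument de restriction à une tranche $X\times\{y\}\cong X$ : tout multiple Cartier du tiré en arrière $p_X^*D$ au voisinage de $(x,y)$ y fournit un multiple Cartier de $D$ au voisinage de $x$.

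Pour l'inclusion inverse, on fixera $(x,y)\in X^{\IQ\text{-fact}}\times Y^{\IQ\text{-fact}}$ et un fibré inversible $L\in\Pic(U_X\times U_Y)$, qu'on décomposera comme dans la démonstration du théorème \ref{theo:produit} sous la forme $L\cong p_X^*L_1\otimes p_Y^*L_2\otimes M$, le fibré $M$ ayant ses restrictions à $\{a\}\times U_Y$ et $U_X\times\{b\}$ triviales. Ce fibré $M$ provient, via $\alpha_X\times\alpha_Y$, d'un fibré inversible sur $\Alb(X)\times\Alb(Y)$, de sorte que son extension réflexive à $X\times Y$ est automatiquement inversible sur $V_X\times V_Y$. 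L'inclusion $X^{\IQ\text{-fact}}\subseteq V_X$ (et son analogue pour $Y$) entraînera donc que l'extension réflexive de $M$ est localement libre au voisinage de $(x,y)$. Par ailleurs, la $\IQ$-factorialité locale de $X$ en $x$ et de $Y$ en $y$ fournit un entier $n$ tel que les extensions réflexives de $L_1^{\otimes n}$ et $L_2^{\otimes n}$ soient inversibles au voisinage de $x$ et $y$ respectivement. Sur un voisinage commun de $(x,y)$, le produit tensoriel des extensions réflexives des trois facteurs de $L^{\otimes n}$ sera alors localement libre ; comme ce produit coïncide sur $U_X\times U_Y$ avec l'extension réflexive de $L^{\otimes n}$, ces deux faisceaux seront égaux sur $X\times Y$ par unicité de l'extension réflexive. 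On conclura ainsi que $L$ est $\IQ$-Cartier en $(x,y)$.

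La principale difficulté sera donc d'établir l'inclusion $X^{\IQ\text{-fact}}\subseteq V_X$, qui repose de façon cruciale sur le fait (\cite[Theorem 10.1]{freyjarden}, utilisé en \ref{directproof}) qu'une variété abélienne non triviale sur un corps qui n'est pas la clôture algébrique d'un corps fini possède un point de non torsion ; c'est ce qui exclut précisément le cas $\ik=\overline{\mathbf{F}_q}$ du théorème.
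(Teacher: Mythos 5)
Your proposal is correct and follows essentially the same route as the paper, which simply declares the proof analogous to that of Theorem \ref{theo:produit}, the key point being that under the hypothesis on $\ik$ the locus of $\IQ$-factoriality is contained in the locus of definition of the Albanese map --- exactly the inclusion you isolate and derive from \cite[Theorem 10.1]{freyjarden} via Theorem \ref{theo:gros2} and its corollary. Your fleshing-out of the tensor-power argument for $L^{\otimes n}$ and of the easy reverse inclusion by restriction to a slice matches what the paper leaves implicit.
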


\begin{proof}
 La d\'emonstration est analogue \`a celle du th\'eor\`eme~\ref{theo:produit}, le point \'etant que sous l'hypoth\`ese faite sur $\ik$ le lieu de d\'efinition du morphisme d'Albanese
co\"incide avec le lieu de $\IQ$-factorialit\'e des diviseurs alg\'ebriquement \'equivalents \`a z\'ero sur $X$.
\end{proof}

\begin{rem}
 Lorsque $\ik$ est la cl\^oture alg\'ebrique d'un corps fini, ce r\'esultat tombe en d\'efaut. Consid\'erons en effet une cubique plane lisse $E \subset \PP^2$, $C$ 
le c\^one affine correspondant, et $\overline C$ la fermeture projective de ce c\^one dans $\PP^3$. Le groupe des classes de diviseurs de Weil de ce c\^one projectif est 
alors isomorphe \`a $\Cl(E)$, autrement dit \`a $\hat E \times \IZ$, 
tandis que son groupe de Picard est isomorphe \`a $\IZ$. 
Puisque $\hat E(\ik)$ est de torsion,
$\overline C$ est une vari\'et\'e localement $\IQ$-factorielle.
On a par ailleurs $\Alb(\overline C)=E$, donc 
le groupe des classes de diviseurs $\Cl(\overline C \times_\ik \overline C)$
 s'identifie \`a:
\[
\Cl(\overline C) \times \Cl(\overline C) \times \Hom(E,\Pic^0(E))\simeq \hat E^2 \times \IZ^2 \times \Hom(E,\hat E),
\]
 et $\Pic(\overline C \times \overline C)$ \`a 
$\Pic(\overline C) \times \Pic(\overline C) \simeq \IZ^2$. Puisque $\Hom(E,\hat E)$ contient 
$\NS(E) \simeq \IZ$, le produit $\overline C \times_\ik \overline C$ n'est pas localement $\IQ$-factoriel. Le diviseur de Weil sur $\overline C \times_\ik \overline C$ associ\'e \`a la polarisation principale 
est le diviseur $\Delta$  d\'efini 
comme la cl\^oture de l'image inverse
 de la diagonale $E \subset E \times E$: c'est un exemple de diviseur 
dont aucun multiple n'est localement principal.
\end{rem}

\subsection{} Soit $X$ une $\ik$-vari\'et\'e normale. Il r\'esulte de la d\'emonstration du th\'eor\`eme~\ref{theo:gros2} que $X$ admet une partition en 
sous-espaces localement ferm\'es ${X = \coprod Z_i}$
sur lesquels le groupe des classes de diviseurs $\Cl(\cO_{X,x})$ ne d\'epend pas du point $x \in Z_i$. Nous renvoyons \`a Benoist~\cite[Theorem~8]{Benoist} 
pour plus de d\'etails.

Cette d\'emonstration a \'egalement la cons\'equence suivante: si $\ik$ n'est pas la cl\^oture alg\'ebrique d'un corps fini, alors le groupe des classes de diviseurs 
$\Cl(\cO_{X,x})$ d'un point $x \in X$ est de torsion si et seulement s'il est fini. On en d\'eduit le:

\begin{cor}
 Soit $A$ une alg\`ebre locale normale essentiellement de type fini sur un corps alg\'ebriquement clos qui n'est pas la cl\^oture alg\'ebrique d'un corps fini. 
Alors le groupe des classes de diviseurs $\Cl(A)$ de $A$ est de torsion si et seulement s'il est fini.
\end{cor}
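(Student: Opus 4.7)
L'implication \og fini $\Rightarrow$ de torsion \fg{} \'etant \'evidente, tout revient \`a \'etablir sa r\'eciproque. Le plan est de se ramener exactement \`a la situation d\'ecrite dans le paragraphe pr\'ec\'edant le corollaire, \ie au cas du groupe des classes de l'anneau local d'un point d'une vari\'et\'e normale projective sur $\ik$.

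La premi\`ere \'etape consiste \`a \'ecrire $A$ comme anneau local d'une vari\'et\'e. Puisque $A$ est essentiellement de type fini sur $\ik$, on a $A=B_\kp$ pour une $\ik$-alg\`ebre de type fini $B$ et un id\'eal premier $\kp\subset B$. Comme $A$ est normal, il est en particulier int\`egre, et l'on peut remplacer $B$ par son quotient $B/\kq$ par le premier minimal $\kq\subset\kp$ pour rendre $\Spec B$ int\`egre. Le lieu normal de $\Spec B$ \'etant ouvert d'apr\`es \cite[\S 6.13]{egaIV2} et contenant le point $x$ correspondant \`a $\kp$, une localisation principale suppl\'ementaire ram\`ene au cas o\`u $X=\Spec B$ est une vari\'et\'e affine normale avec $A=\cO_{X,x}$. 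Suivant la r\'eduction initiale de la d\'emonstration du th\'eor\`eme \ref{theo:gros1}, on plonge ensuite $X$ dans un espace projectif et on normalise son adh\'erence, obtenant une vari\'et\'e projective normale $\bar X$ contenant $X$ comme ouvert, de sorte que $A=\cO_{\bar X,x}$.

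La seconde \'etape est la conclusion proprement dite. L'hypoth\`ese que $\Cl(A)$ est de torsion signifie exactement que $x$ appartient au lieu de $\IQ$-factorialit\'e de $\bar X$. Or la d\'emonstration du th\'eor\`eme \ref{theo:gros2} montre, sous l'hypoth\`ese faite sur $\ik$, que tout diviseur de Weil sur $\bar X$ alg\'ebriquement \'equivalent \`a z\'ero est Cartier au voisinage de $x$, et devient donc principal dans $\Cl(A)$. Par cons\'equent le morphisme surjectif naturel $\Cl(\bar X)\to\Cl(A)$ se factorise par le groupe de N\'eron--Severi $\NS(\bar X)$, qui est de type fini. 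Comme un quotient de torsion d'un groupe ab\'elien de type fini est n\'ecessairement fini, on en d\'eduit que $\Cl(A)$ est fini.

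Le seul point d\'elicat est la caract\'erisation fine du lieu de $\IQ$-factorialit\'e fournie par la d\'emonstration du th\'eor\`eme \ref{theo:gros2}, et plus pr\'ecis\'ement le fait que sous l'hypoth\`ese faite sur $\ik$ un diviseur de Weil $\IQ$-Cartier alg\'ebriquement \'equivalent \`a z\'ero est en fait Cartier (c'est l\`a qu'intervient l'argument de \ref{directproof} sur l'existence d'un point de non-torsion sur toute vari\'et\'e ab\'elienne non triviale). Les r\'eductions au cas affine normal puis projectif normal, ainsi que le passage d'un sous-groupe de torsion d'un groupe de type fini au fini, ne pr\'esentent aucune difficult\'e.
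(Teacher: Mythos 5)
Votre d\'emonstration est correcte et suit essentiellement la m\^eme d\'emarche que celle, laiss\'ee implicite, du texte : r\'eduction au cas o\`u $A$ est l'anneau local d'un point $x$ d'une vari\'et\'e projective normale $\bar X$, puis utilisation de la d\'emonstration du th\'eor\`eme \ref{theo:gros2} pour voir que $x$ appartient au lieu de d\'efinition de l'application d'Albanese, de sorte que $\Cl_a(\bar X)$ s'annule dans $\Cl(A)$ et que $\Cl(A)$ est un quotient de torsion du groupe de type fini $\NS(\bar X)$, donc fini.
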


\bibliographystyle{amsplain}
\bibliography{Biblio}

\end{document}